\newtheorem{thm}{Theorem}[section]
\newtheorem{lem}[thm]{Lemma}
\newtheorem{conj}[thm]{Conjecture}
\newtheorem{prop}[thm]{Proposition}
\theoremstyle{definition}
\newtheorem{definition}[thm]{Definition}
\newtheorem{remark}[thm]{Remark}
\newtheorem{example}[thm]{Example}
\numberwithin{equation}{section}
\newcommand{\CC}{\mathbb{C}}
\newcommand\xto[1]{\xrightarrow{#1}}
\newcommand{\mf}{\mathfrak}
\newcommand{\mb}{\mathbb}
\def\Spec{\mbox{\rm Spec}}
\DeclareMathOperator\rank{rank}
\DeclareMathOperator\Gr{Gr}
\DeclareMathOperator\Sym{Sym}
\DeclareMathOperator\codim{codim}
\newcommand{\colorA}[1]{{\color{blue}{#1}}}
\newcommand{\colorB}[1]{{\color{red}{#1}}}
\newcommand{\colorC}[1]{{\color{blue}{#1}}} 
\begin{document}

\title{Free resolutions constructed from bigradings on Lie algebras}

\author{Xianglong Ni}
\address{Department of Mathematics, UC Berkeley, CA 94720}
\email{xlni@berkeley.edu}

\author{Jerzy Weyman}
\address{Instytut Matematyki, Uniwersytet Jagiello\'nski, Krak\'ow, Poland}
\email{jerzy.weyman@uj.edu.pl}

\maketitle

\begin{abstract}
	We construct two families of free resolutions that resolve the ideals of certain opposite Schubert varieties restricted to the big open cell. We conjecture that these examples have genericity properties translating to structure theorems for perfect ideals with given Betti numbers, extending the well-known theorem of Buchsbaum and Eisenbud on Gorenstein ideals of codimension three.
\end{abstract} 

\setcounter{tocdepth}{2}
\tableofcontents

\section{Introduction}

In \cite{BE77}, Buchsbaum and Eisenbud gave their classical result characterizing free resolutions of Gorenstein ideals $I$ of codimension three in a local ring $R$: up to a change of basis in the free modules, the resolution of $R/I$ has the form
\[
	0 \to R \xto{d_3} F_1^* \xto{d_2} F_1 \xto{d_1} R
\]
where $F_1 = R^n$ for an odd integer $n \geq 3$, $d_2$ is a skew-symmetric matrix, and $d_1, d_3$ are comprised of the $(n-1)\times (n-1)$ Pfaffians of $d_2$. In particular, the generic such resolution over the polynomial ring $S$ in variables $x_{ij}$ ($1 \leq i < j \leq n$) is acyclic, and all other such resolutions are obtained via some specialization $S \to R$.

In a sense, providing the generic example explicitly makes this the strongest type of structure theorem possible. Other results of this type include the classical Hilbert-Burch theorem when $\operatorname{pdim}R/I = 2$. Since almost complete intersections are linked to Gorenstein ideals, their resolutions are fully understood as well in codimension three \cite{annebrown}, \cite{CVW21}.

But aside from these cases, such definitive structure theorems remain lacking. Insofar as generic examples go, the closest would be the conjecture---mostly settled, see \cite{HM85} and \cite{VV86}---that Gorenstein ideals of codimension four on six generators are hypersurface sections of Gorenstein ideals of codimension three. The generic resolution of such an ideal would then be the tensor product of the generic Buchsbaum-Eisenbud complex on five generators with the complex $0\to S \xto{\cdot f} S$ for another indeterminate $f$.

In this paper, we construct two families of complexes resolving perfect ideals, which we conjecture have genericity properties akin to the preceding. Specifically, we conjecture that each of our examples specializes to a resolution of any perfect ideal sharing the same Betti numbers. The motivation for this conjecture comes from a deep connection to Weyman's generic ring \cite{W89}, \cite{W18}. Explaining this connection would require extensive background on the generic ring that is not needed for the construction itself, so we defer it to a forthcoming paper to keep the present one focused on examples.

Our examples originate from Lie algebras of type $D_n$ and $E_n$, using bigradings induced by particular pairs of vertices on the Dynkin diagram. In the first family of examples, for the case $D_n$, we recover the generic Buchsbaum-Eisenbud resolutions for Gorenstein ideals of codimension three on $n$ generators, as well as Brown's closely related results on almost complete intersections. By applying the same construction to $E_n$, we obtain resolutions with Betti numbers $(1,5,6,2), (1,6,7,2), (1,5,7,3), (1,7,8,2)$, and $(1,5,8,4)$. 

In the second family of examples, for the case $E_6$, we recover the (conjecturally) generic resolution for Gorenstein ideals of codimension four on six generators. We then naturally conjecture that our analogous examples from $E_7$ and $E_8$ are generic for seven and eight generators respectively.

We give the necessary representation theory background and then outline the general construction of the resolutions in \S\ref{sec:general-construction}. Then we describe explicitly (when feasible) each length three example in \S\ref{sec:length3} and each length four example in \S\ref{sec:Gor4}. In \S\ref{sec:proofres} we prove that our complexes resolve the ideals of certain opposite Schubert varieties restricted to the big open cell. Finally, in \S\ref{sec:invariants}, we describe how linear sections of the resolutions from \S\ref{sec:length3} and \S\ref{sec:Gor4} can be interpreted in terms of invariants, extending observations made in \cite{SW21}.

\clearpage
\section{The general construction}\label{sec:general-construction}

We will be concerned with simply-laced Dynkin diagrams (types $A_n$, $D_n$, and $E_n$). Although it is nonstandard, we will display these diagrams in the following $T$-shaped manner, so as to match the notation in \cite{W18}.
\[\begin{tikzcd}[column sep = small, row sep = small]
x_1 \ar[r,dash] & u \ar[r,dash]\ar[d,dash] & y_1 \ar[r,dash] & \cdots \ar[r,dash] & y_{q-1} \\
& z_1 \ar[d,dash]\\
& \vdots \ar[d,dash]\\
& z_{r-1}
\end{tikzcd}\]
In some degenerate cases, the right arm may have length zero, resulting in type $A_n$. We will primarily be interested in the types $D_n$ and $E_n$, as the interesting examples are produced from those cases.

To such a diagram, let $\mf{g}$ be the associated complex simple Lie algebra. Each node $t$ of the Dynkin diagram corresponds to a simple root $\alpha_t$ of $\mf{g}$. Let $n$ denote the rank of $\mf{g}$, i.e. the number of nodes in the Dynkin diagram. Since the simple roots form a basis of the root space of $\mf{g}$, each finite dimensional representation $V$ of $\mf{g}$ is $\mb{Z}^n$-multigraded. Of course, this includes the adjoint representation: the case of $\mf{g}$ itself.

By designating a particular subset $I$ of the nodes on the Dynkin diagram (equivalently, of the simple roots) we can coarsen this $\mb{Z}^n$ grading to a $\mb{Z}^{|I|}$ grading, which we refer to as the $I$-grading on $V$. Each graded piece is a representation of the subalgebra $\mf{g}^{(I)}$ corresponding to the diagram which remains after deleting the vertices in $I$. For our purposes, $I$ will typically be a singleton set.

For each vertex $t$ of the Dynkin diagram, let $\omega_t$ denote the corresponding fundamental weight. The dominant integral weights $\omega$ are nonnegative integral combinations of the $\omega_t$. Let $V(\omega)$ denote the finite dimensional irreducible representation of $\mf{g}$ with highest weight $\omega$. To refer to a representation of a subalgebra $\mf{g}^{(I)}$, we will write $V(\omega,\mf{g}^{(I)})$ instead.

The full $I$-graded decomposition of $V(\omega)$ can be computed algorithmically; see for example \cite{LW19}. The top piece in particular is very easy to read off: it is simply $V(\omega,\mf{g}^{(I)})$ (ignoring the coefficients of $\omega_t$ in $\omega$ for $t \in I$). We write $i^\mathrm{top}_t$ for the inclusion of the top $t$-graded piece and $p^\mathrm{bottom}_t$ for the projection onto the bottom $t$-graded piece.

\begin{example}
	Let $I = \{x_1\}$. The subalgebra of $\mf{g}$ corresponding to the nodes
	\[
		y_{q-1},\ldots,y_1,u,z_1,\ldots,z_{r-1}
	\]
	is $\mf{sl}(F)$ where $F=\mb{C}^{q+r}$. If $V$ is a representation of $\mf{g}$, then each $x_1$-graded component of $V$ will be a representation of $\mf{sl}(F)$. For example, the top $x_1$-graded component of $V(\omega_{z_{r-1}})$ is $V(\omega_{z_{r-1}},\mf{sl}(F)) = F^*$. For $V(\omega_{y_{q-1}})$ it is $V(\omega_{y_{q-1}},\mf{sl}(F)) = F$, and for $V(\omega_{x_1})$ it is the trivial representation $V(0,\mf{sl}(F)) = \mb{C}$ because $x_1 \in I$.
\end{example}
The bottom components can then be inferred from duality on a case-by-case basis.
\begin{remark}\label{rem:dual-node}
	To keep the exposition in this section simple and uniform, we will assume that the Dynkin type under consideration has self-dual representations. This is the case for $D_n$ (even $n$), $E_7$, and $E_8$. For the remaining cases---i.e. $A_n$ (any $n$), $D_n$ (odd $n$), and $E_6$---replace the vertex $x_1$ with its dual in what follows, and adjust the definition of $\mf{sl}(F_2) = \mf{g}^{(x_1)}$ accordingly. This adjustment is needed in order to still produce a complex and will be explained more precisely when the need arises. See for example \S\ref{subsec:1nn1-n-odd}.
\end{remark}
We now describe the construction of our two families of complexes. For the length three construction, we will be concerned with the gradings induced by the vertices $x_1$ and $z_1$. To better describe the graded components, we fix the following subalgebras of $\mf{g}$:
\begin{itemize}
	\item $\mf{sl}(F_3)$ corresponding to the nodes $z_2,z_3,\ldots,z_{r-1}$, where $F_3 = \mb{C}^{r-1}$,
	\item $\mf{sl}(F_2)$ corresponding to the nodes $y_{q-1},\ldots,y_1,u,z_1,\ldots,z_{r-1}$, where $F_2 = \mb{C}^{q+r}$, and
	\item $\mf{sl}(F_1)$ corresponding to the nodes $y_{q-1},\ldots,y_1,u,x_1$, where $F_1 = \mb{C}^{q+2}$.
\end{itemize}
With this notation, each $x_1$-graded component is a representation of $\mf{sl}(F_2)$ and each $z_1$-graded component is a representation of $\mf{sl}(F_3)\times \mf{sl}(F_1)$. For example, the top $z_1$-graded component of $V(\omega_{z_{r-1}})$ is $F_3^*$, and the bottom $x_1$-graded component is $F_2$ (for the latter we use the assumption that the representation is self-dual).

Let $G$ be the simply-connected complex Lie group corresponding to $\mf{g}$. Then representations $V$ of $\mf{g}$ are also representations of $G$. Let $\rho_V(g)$ denote the action of $g\in G$ on $V$. For each $g \in G$, we construct a complex
\[
	0 \to F_3^* \to F_2 \to F_1^* \to \mb{C}
\]
of $\mb{C}$-vector spaces as follows.

Using the representation $V=V(\omega_{z_{r-1}})$, we define $d_3$ to be the composite
\[
	F_3^* \xto{i^\mathrm{top}_{z_1}} V \xto{\rho_V(g)} V \xto{p^\mathrm{bottom}_{x_1}} F_2.
\]
Using the representation $V=V(\omega_{y_{q-1}})$, we define $d_2$ to be the dual of the composite
\[
	F_1 \xto{i^\mathrm{top}_{z_1}} V \xto{\rho_V(g)} V \xto{p^\mathrm{bottom}_{x_1}} F_2^*.
\]
\begin{remark}\label{rem:dual-rep-inverse}
	Note that the action of $g$ on the dual $V^*$ is precomposition by the action of $g^{-1}$ on $V$. This means that an alternative definition of $d_2$ is the composite
	\[
	F_2 \xto{i^\mathrm{top}_{x_1}} V^* \xto{\rho_V(g^{-1})} V^* \xto{p^\mathrm{bottom}_{z_1}} F_1^*.
	\]
\end{remark}
Lastly, using the representation $V=V(\omega_{x_1})$, we define $d_1$ to be the composite
\[
	F_1^* \xto{i^\mathrm{top}_{z_1}} V \xto{\rho_V(g)} V \xto{p^\mathrm{bottom}_{x_1}} \mb{C}.
\]
Since we identify top and bottom graded components with particular representations of $\mf{sl}(F_i)$, each differential is only defined up to overall multiplication by a nonzero scalar. When we describe these differentials explicitly in later sections, a choice of such a scalar will be implicit.
\begin{lem}
	The sequence of maps defined above forms a complex.
\end{lem}
\begin{proof}
	The composite $d_2 d_3$ is adjoint to the tensor product $d_3 \otimes d_2^*$ followed by the trace:
	\[
		F_3^* \otimes F_1 \to V(\omega_{z_{r-1}}) \otimes V(\omega_{y_{q-1}}) \to V(\omega_{z_{r-1}}) \otimes V(\omega_{y_{q-1}}) \to F_2 \otimes F_2^* \to \mb{C}.
	\]
	The component $F_3^* \otimes F_1$ is the top $z_1$-graded piece of the irreducible representation $V(\omega_{z_{r-1}} + \omega_{y_{q-1}})$ which occurs once inside the tensor product. The bottom $x_1$-graded piece of this irreducible subrepresentation is $V(\omega_{z_{r-1}} + \omega_{y_{q-1}}, \mf{sl}(F_2)) = \mf{sl}(F_2)$, i.e. the complementary factor to $\mb{C}$ inside of $F_2^* \otimes F_2$. Since the components $F_3^* \otimes F_1$ and $\mb{C}$ reside in distinct subrepresentations, this composite is necessarily zero.
	
	A similar argument works for the composite $d_1 d_2$, instead looking at the tensor product $d_2^* \otimes d_1$:
	\[
		\mb{C} \to F_1 \otimes F_1^* \to V(\omega_{y_{q-1}}) \otimes V(\omega_{x_1}) \to V(\omega_{y_{q-1}}) \otimes V(\omega_{x_1}) \to F_2^* \otimes \mb{C}.
	\]
	Here $F_2^* \otimes \mb{C}$ is the bottom $x_1$-graded piece of $V(\omega_{y_{q-1}} + \omega_{x_1})$, which occurs once inside the tensor product. But its top $z_1$-graded piece is $\mf{sl}(F_1)$, complementary to $\mb{C}$ inside of $F_1 \otimes F_1^*$, so once again we deduce that the composite is zero.
\end{proof}

For $t \in \{x_1,z_1\}$, let $\mf{n}_t$ denote the nilpotent subalgebra that is the negative part of $\mf{g}$ in the $t$-grading. The exponential map $\exp \colon \mf{n}_t \to G$ is a diffeomorphism onto its image $N_t$, a unipotent subgroup of $G$. So the coordinate ring of $N_t$ is just that of $\mf{n}_t$, i.e. $R_t = \operatorname{Sym}_\mb{C}(\mf{n}_t^*)$.

Let $L_t \in \mf{n}_t \otimes R_t$ be the ``trace element,'' i.e. the image of $1 \in \mb{C}$ under the map
\[
	\mb{C} \to \mf{n}_t \otimes \mf{n}_t^* \subset \mf{n}_t \otimes R_t.
\]
In other words, $L_t$ represents a generic element of $\mf{n}_t$. The action of $\mf{n}_t$ on each representation $V$ can then be understood as the image of $L_t$ under $\sigma_V \colon (\mf{n}_t \to \operatorname{End}(V)) \otimes R_t$. Note that $\sigma_V(L_t)$ is nilpotent, hence its exponential is algebraically well-defined.

If we replace $\rho_V(g)$ in our construction by $\exp \sigma_V(L_t)$, we obtain a complex $\mb{F}_t$ of free $R_t$-modules
\[
	0 \to F_3^* \otimes R_t \to F_2 \otimes R_t \to F_1^* \otimes R_t \to \mb{C} \otimes R_t.
\]
Note that the polynomial ring $\operatorname{Sym}_\mb{C}(\mf{n}_t^*)$ and the complex $\mb{F}_t$ are $\mb{Z}^n$-multigraded by the simple roots. The multigrading on $\mb{F}_t$ can be deduced completely combinatorially without actually computing the differentials; we illustrate this process for $E_6$ in \S\ref{sec:length3}.

From the above, we actually produce \emph{two} complexes $\mb{F}_{x_1}$ and $\mb{F}_{z_1}$ over different polynomial rings. It is then natural to ask how these compare.
\begin{lem}\label{lem:dual-format-equiv}
	Let $\mf{n} = \mf{n}_{x_1} + \mf{n}_{z_1} \subset \mf{g}$, and $R = \operatorname{Sym}_\mb{C}(\mf{n}^*)$. Viewed over $R$, the complexes $\mb{F}_{x_1}$ and $\mb{F}_{z_1}$ are equivalent in the sense that there exists an automorphism $\phi$ of $R$ such that $\phi^* \mb{F}_{x_1} \cong \mb{F}_{z_1}$ as complexes.
\end{lem}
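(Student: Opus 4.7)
The plan is to introduce an intermediate ``universal'' complex $\mb{F}_N$ over $R$, defined by the same recipe as $\mb{F}_t$ but using the trace element $L\in\mf{n}\otimes R$ of the full $\mf{n}$ in place of $L_t$. The same adjoint/trace argument as in the preceding lemma (which never used that $L_t$ came from a single grading) shows $\mb{F}_N$ is a complex. I would then realize both $\mb{F}_{x_1}$ and $\mb{F}_{z_1}$ (base-changed to $R$ via the linear exponential parametrization) as equivalent to $\mb{F}_N$ after applying appropriate automorphisms $\psi_{x_1},\psi_{z_1}$ of $R$, so that $\phi=\psi_{z_1}\psi_{x_1}^{-1}$ yields the claim.

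The main structural input is that $\mf{n}_{x_1}$ is an ideal of $\mf{n}$, since bracketing a root using $\alpha_{x_1}$ with any root in $\mf{n}$ still uses $\alpha_{x_1}$; its complement $\mf{c}_{x_1}\subset \mf{g}^{(x_1)}=\mf{sl}(F_2)$, spanned by the negative roots in $\mf{n}$ not involving $\alpha_{x_1}$, is a subalgebra. Symmetrically $\mf{n}_{z_1}$ is an ideal with complement $\mf{c}_{z_1}\subset \mf{sl}(F_1)$. The multiplication maps $C_{x_1}\times N_{x_1}\to N$ and $N_{z_1}\times C_{z_1}\to N$ are therefore isomorphisms of varieties, and comparing these ``multiplicative'' parametrizations to the linear one yields $\psi_{x_1},\psi_{z_1}\in\operatorname{Aut}(R)$. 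To identify $\psi_{x_1}^*\mb{F}_N$ with $\mb{F}_{x_1}\otimes_{R_{x_1}} R$, I would write $g=c\cdot n$ with $c\in C_{x_1}$, $n\in N_{x_1}$ and use that $c$ preserves the $x_1$-grading to obtain $p^{\mathrm{bottom}}_{x_1}\circ\rho_V(c)=\rho_V(c)|_{F_2}\circ p^{\mathrm{bottom}}_{x_1}$ (and its analogues on $F_2^*$ and $\mb{C}$), exhibiting each differential of $\psi_{x_1}^*\mb{F}_N$ as the corresponding differential of $\mb{F}_{x_1}$ pre- or post-composed with a Levi action of $c$. A symmetric analysis on the $z_1$-side, exploiting that $\mf{c}_{z_1}\subset\mf{sl}(F_1)$ acts trivially on $F_3^*$ but nontrivially on $F_1,F_1^*$, delivers $\psi_{z_1}^*\mb{F}_N\cong \mb{F}_{z_1}\otimes_{R_{z_1}} R$.

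The main technical obstacle I anticipate is verifying that these Levi-action module automorphisms at the three nonzero spots of each complex assemble into an honest chain isomorphism. This reduces to checking that adjacent differentials interact with dual pairs of fundamental representations: on the $x_1$-side, $F_2=V(\omega_{z_{r-1}},\mf{sl}(F_2))$ arises as the bottom $x_1$-piece of $V(\omega_{z_{r-1}})$ for $d_3$, while $F_2^*=V(\omega_{y_{q-1}},\mf{sl}(F_2))$ arises for $d_2$, so the Levi actions of $c$ on adjacent spots become inverse-transpose and the intermediate adjustments cancel telescopically along the complex. This is precisely the duality already invoked in the proof that $\mb{F}_t$ is a complex, so the compatibility should fall out from exactly the same representation-theoretic bookkeeping.
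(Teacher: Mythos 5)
Your proposal is correct, but it takes a genuinely different route from the paper's. The paper passes through the \emph{intersection}: it introduces the complex $\mb{F}_{x_1,z_1}$ built on $\mf{n}_{x_1,z_1} = \mf{n}_{x_1}\cap\mf{n}_{z_1}$, decomposes $L_{x_1} = L_0 + L_-$ (with $L_0$ of $z_1$-degree $0$) via Baker--Campbell--Hausdorff, and shows $\phi^*\mb{F}_{x_1}\cong\mb{F}_{x_1,z_1}$ where the error term $\exp(\sigma_V(L_0))$ is absorbed into a Levi change of basis because $L_0$ preserves the relevant grading; the same argument applies to $\mb{F}_{z_1}$ by symmetry. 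You instead pass through the \emph{union}: you build $\mb{F}_N$ on all of $\mf{n}=\mf{n}_{x_1}+\mf{n}_{z_1}$, use that $\mf{n}_{x_1}$ (resp.\ $\mf{n}_{z_1}$) is an ideal of $\mf{n}$ to produce the normal factorization $N = C_{x_1}\cdot N_{x_1}$, compare the resulting multiplicative parametrization against the exponential one to get $\psi_{x_1}\in\operatorname{Aut}(R)$, and absorb the Levi factor $\rho_V(c)$ at the top or bottom projection. The underlying mechanism — unipotent factorization plus pushing a Levi element past $p^{\mathrm{bottom}}$ or $i^{\mathrm{top}}$ — is identical in both arguments, and the duality bookkeeping you flag as the main concern (that the bottom $x_1$-pieces of $V(\omega_{z_{r-1}})$ and $V(\omega_{y_{q-1}})$ are the mutually dual $\mf{sl}(F_2)$-modules $F_2$ and $F_2^*$, so the induced module automorphisms are inverse--transpose and splice into a chain isomorphism) does indeed check out; the paper glosses over exactly the same point with ``up to change of basis we may ignore $M_V$.'' What your version buys is a single universal object over the full ring $R$ with both $\mb{F}_{x_1}\otimes R$ and $\mb{F}_{z_1}\otimes R$ realized directly as pullbacks, which is arguably cleaner and avoids the paper's somewhat implicit base change from $R_{x_1}$ up to $R$; the paper's smaller intermediary $\mb{F}_{x_1,z_1}$ is marginally simpler to write down and makes transparent that the ``shared'' coordinates $\mf{n}_{x_1,z_1}^*$ are what actually carry the complex. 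Either way one must also note that the $\mb{F}_N$ (resp.\ $\mb{F}_{x_1,z_1}$) is a complex, which you correctly observe follows from the same argument as the abstract complex property — that argument was for arbitrary $g\in G$ and does not depend on the parametrization.
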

\begin{proof}
	We introduce a third complex using only the coordinates ``shared'' between $\mb{F}_{x_1}$ and $\mb{F}_{z_1}$. Let $\mf{n}_{x_1,z_1}$ be the part of $\mf{g}$ that is strictly negative in the $x_1,z_1$-bigrading, i.e. $\mf{n}_{x_1,z_1} = \mf{n}_{x_1} \cap \mf{n}_{z_1}$. Let $\mb{F}_{x_1,z_1}$ be the complex obtained using $\mf{n}_{x_1,z_1}$ in place of $\mf{n}_t$ in the construction above. We claim that $\mb{F}_{x_1}$ and $\mb{F}_{z_1}$ are both equivalent to this complex (viewed over $R$), which will suffice to prove the lemma.
	
	For brevity we consider only $\mb{F}_{x_1}$, as the situation for $\mb{F}_{z_1}$ is very similar. Decompose $L_{x_1} = L_0 + L_-$ where $L_0$ is degree 0 with respect to the $z_1$-grading and $L_-$ is concentrated in negative $z_1$-degrees. The Baker-Campbell-Hausdorff formula implies that there exists an $\widetilde{L}$ such that
	\[
		\exp \widetilde{L} = (\exp L_0)(\exp L_-).
	\]
	Explicitly, $\widetilde{L} = L_{x_1} + \frac{1}{2}[L_0,L_-] + \cdots$, and this is well-defined because the terms eventually become zero. Each coordinate of $L_{x_1}$ is a variable $x \in R$, and the formula for $\widetilde{L}$ shows that the corresponding coordinate $x'$ of $\widetilde{L}$ is $x$ plus other terms with strictly lower $z_1$-degree. Thus the map $\phi$ sending each $x$ to $x'$ is invertible. The tensor product $\phi^* \mb{F}_{x_1}$ is just the complex obtained by replacing $L_{x_1}$ with $\widetilde{L}$ throughout the construction.
	
	So, the differentials $d_3, d_2^*, d_1$ of $\phi^* \mb{F}_{x_1}$ have the form
	\[
		p^\mathrm{bottom}_{z_1} \exp(\sigma_V(\widetilde{L})) i^\mathrm{top}_{x_1}
		= p^\mathrm{bottom}_{z_1} \exp(\sigma_V(L_0))\exp(\sigma_V(L_-)) i^\mathrm{top}_{x_1}
		= M_V p^\mathrm{bottom}_{z_1} \exp(\sigma_V(L_-)) i^\mathrm{top}_{x_1}
	\]
	where $M_V$ is the restriction of $\exp(\sigma_V(L_0))$ to the bottom $z_1$-graded piece of $V$, i.e. an automorphism of that graded component. Hence up to change of basis of the modules in the complex, we may ignore the term $M_V$. But then we are just left with $\mb{F}_{x_1,x_2}$, as desired.
\end{proof}
In \S\ref{sec:proofres} we will prove that the complexes $\mb{F}_t$ are in fact acyclic. Our interest in these complexes stems primarily from the following conjecture, c.f. \cite{WICERM}.
\begin{conj}[Genericity conjecture for codimension three]\label{conj:gen3}
	Let $S$ be a local $\mb{C}$-algebra and suppose that
	\[
		\mb{G} \colon 0 \to S^{r-1} \to S^{q+r} \to S^{q+2} \to S
	\]
	resolves $S/I$ for some perfect ideal $I$. Then there exists a homomorphism $\phi\colon R_t \to S$ such that $\phi^* \mb{F}_t \cong \mb{G}$.
\end{conj}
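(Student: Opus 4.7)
The plan is to reduce the conjecture to identifying $R_t$ with a suitable localization or completion of Weyman's generic ring $\hat{R}_{\mathrm{gen}}$ for resolutions with Betti numbers $(1, q+2, q+r, r-1)$. By construction $\hat{R}_{\mathrm{gen}}$ admits a specialization to any such $\mb{G}$ over a local $\mb{C}$-algebra $S$, so the desired $\phi$ would follow from composing this specialization with the structural map $R_t \to \hat{R}_{\mathrm{gen}}$ furnished by the Lie-theoretic construction. This is exactly the ``deep connection'' alluded to by the authors as motivation for the conjecture.

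Concretely I would proceed in three stages. First, establish that $\mb{F}_t$ is a minimal free resolution over $R_t$: acyclicity is the content of \S\ref{sec:proofres}, and minimality follows because the differentials arise from the nilpotent element $L_t$, whose coordinate entries vanish at the origin of $\Spec(R_t)$. Second, given $\mb{G}$ over $S$, reverse-engineer $\phi$ by extracting from the entries of $d_1, d_2, d_3$ the images of the simple-root coordinates spanning $\mf{n}_t^*$. The Baker--Campbell--Hausdorff formalism used in the proof of Lemma \ref{lem:dual-format-equiv} strongly suggests that once these leading coordinates are fixed, the higher-weight coordinates of $\mf{n}_t^*$ are forced by the chain complex condition. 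Third, verify $\phi^* \mb{F}_t \cong \mb{G}$: both are minimal free resolutions of $S/I$ by the Buchsbaum--Eisenbud acyclicity criterion applied to $\phi^* \mb{F}_t$, and such resolutions are unique up to isomorphism over a local ring.

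The main obstacle lies in the second stage: one must show that the entries of an arbitrary resolution $\mb{G}$ with the prescribed Betti numbers actually satisfy the polynomial relations imposed on coordinates of $\mf{n}_t \subset \mf{g}$ by the Serre relations of $\mf{g}$. Equivalently, one needs to rule out the existence of a perfect ideal with these Betti numbers whose syzygies are \emph{not} governed by the ambient $\mf{g}$-structure. For type $D_n$ this reduces to Buchsbaum--Eisenbud and can be verified by hand; for the $E_n$ cases it is precisely the content of the genericity of $\hat{R}_{\mathrm{gen}}$, a deep statement whose proof will presumably require the machinery of \cite{W89}, \cite{W18} deferred by the authors to a later paper. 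I therefore do not expect a self-contained proof using only the material presented here, but rather an argument that bootstraps off the generic ring once its structure in these types is pinned down.
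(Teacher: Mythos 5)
The statement you were asked to prove is Conjecture~\ref{conj:gen3}, which the paper poses as an \emph{open conjecture}; there is no proof of it in the paper. The authors explicitly say that the motivation comes from a connection to Weyman's generic ring and that ``explaining this connection would require extensive background on the generic ring that is not needed for the construction itself, so we defer it to a forthcoming paper.'' What the paper does prove is acyclicity of the complexes $\mb{F}_t$ (Theorem~\ref{thm:acyclic}); it does not prove, and does not claim to prove, that every perfect ideal with the given Betti numbers specializes from $\mb{F}_t$.

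Your proposal is not a proof, and you say so yourself, but your assessment is essentially correct and aligns with what the paper hints at: the intended route is to relate $R_t$ to Weyman's generic ring and pull the universal property through. Two small cautions on the details you do give. First, $\mb{F}_t$ is not always minimal: in the $D_n$ case with $n$ even the paper exhibits a unit entry in $d_2$ and explicitly calls the complex non-minimal, so the claim that ``the differentials arise from the nilpotent element $L_t$, whose coordinate entries vanish at the origin'' does not by itself give minimality --- exponentiation can and does produce constant terms once a graded piece of $V$ meets both a top and bottom component. This does not invalidate the conjecture (since $\mb{G}$ is not required to be minimal either), but it breaks your third stage, which invokes uniqueness of minimal resolutions over a local ring; that argument needs to be replaced by something that allows for trivial summands. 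Second, your stage two (``reverse-engineer $\phi$'' from the entries of $\mb{G}$) is exactly the hard part, and as you note, ruling out exotic resolutions not governed by the $\mf{g}$-structure is the whole substance of the conjecture; it is not something one should expect to extract from the constructions in this paper alone.

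In short: you correctly identified that this is a conjecture, correctly identified the authors' intended strategy, and correctly identified where the real difficulty lies. There is no paper proof to compare against.
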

If proven, this conjecture would yield structure theorems for perfect ideals of codimension three with Betti numbers $(1,n,n,1)$, $(1,4,n,n-3)$, $(1,5,6,2)$, $(1,5,7,3)$, $(1,6,7,2)$, $(1,5,8,4)$, and $(1,7,8,2)$. We will see in \S\ref{sec:length3} that this conjecture is consistent with the well-known results concerning the first two cases on this list.

The second family of examples follows a very similar construction. We assume the Dynkin diagram has the form
\[\begin{tikzcd}[column sep = small, row sep = small]
x_1 \ar[r,dash] & u \ar[r,dash]\ar[d,dash] & y_1 \ar[r,dash] & \cdots \ar[r,dash] & y_{q-1} \\
& z_1 \ar[d,dash]\\
& z_2
\end{tikzcd}\]
i.e. it is $E_n$ for $4 \leq n \leq 8$, where $E_4 = A_4$ and $E_5 = D_5$. Instead of using the $x_1$ and $z_1$ gradings, we use the $x_1$ and $z_2$ gradings. (Again Remark~\ref{rem:dual-node} applies.) In order to describe the graded components, we fix the following subalgebras of $\mf{g}$:
\begin{itemize}
	\item $\mf{sl}(F)$ corresponding to the nodes $y_{q-1},\ldots,y_1,u,z_1,z_2$, where $F = \mb{C}^{q+2}$, and
	\item $\mf{sl}(H)$ corresponding to the nodes $y_{q-1},\ldots,y_1,u,z_1$, where $H = \mb{C}^{q+1}$.
\end{itemize}
With this setup, the $x_1$-graded components are representations of $\mf{sl}(F)$ and the $z_2$-graded components are representations of $\mf{so}(H \oplus H^*)$. 

For every $g \in G$ we can construct a self-dual complex of $\mb{C}$-vector spaces
\[
	0 \to \mb{C} \xto{d_4} F^* \xto{d_3} H\oplus H^* \xto{d_2} F \xto{d_1} \mb{C}.
\]
Using the representation $V = V(\omega_{z_2})$, we define $d_4$ to be the composite
\[
	\mb{C} \xto{i^\mathrm{top}_{z_2}} V \xto{\rho_V(g)} V \xto{p^\mathrm{bottom}_{x_1}} F^*.
\]
Using the representation $V = V(\omega_{y_{q-1}})$, we define $d_3$ to be the dual of the composite
\[
H \oplus H^* \xto{i^\mathrm{top}_{z_2}} V \xto{\rho_V(g)} V \xto{p^\mathrm{bottom}_{x_1}} F.
\]
Then we define $d_2 = d_3^*$ and $d_1 = d_4^*$.
\begin{lem}
	The sequence of maps defined above forms a complex.
\end{lem}
\begin{proof}
	This proceeds similarly to the length three case. For the composite $d_3 d_4$, consider the tensor product representation $V(\omega_{z_2}) \otimes V(\omega_{y_{q-1}})$. The irreducible representation $V(\omega_{z_2} + \omega_{y_{q-1}})$ appears inside this tensor product once. Its top $z_2$-graded component is $H \oplus H^*$, but its bottom $x_1$-graded component is $\mf{sl}(F)$, complementary to $\mb{C}$ inside of $F^* \otimes F$. Thus $H\oplus H^*$ and $\mb{C}$ reside in different subrepresentations, implying that the composite is zero.
	
	It remains to consider the composite $d_2 d_3$, which is adjoint to
	\[
		\mb{C} \to S_2(H \oplus H^*) \to S_2 V(\omega_{y_{q-1}}) \to S_2 V(\omega_{y_{q-1}}) \to S_2 F \subset F \otimes F
	\]
	The symmetric square $S_2 (H \oplus H^*)$ decomposes as $\mb{C} \oplus V(2\omega_{y_{q-1}}, \mf{sl}(H \oplus H^*))$, where the former $\mb{C}$ factor is the one of interest. However, the latter piece is the one appearing in $V(2\omega_{y_{q-1}}) \subset S_2 V(\omega_{y_{q-1}})$, whose bottom $z_1$-graded component is $S_2 F$. So again, we see that $\mb{C}$ and $F\otimes F$ reside in different subrepresentations and conclude the composite is zero.
\end{proof}

For $t \in \{x_1,z_2\}$, let $\mf{n}_t$ denote the nilpotent subalgebra that is the negative part of $\mf{g}$ in the $t$-grading. Proceeding exactly as we did for the length three construction, we obtain two complexes $\mb{F}_t$ over polynomial rings $R_t$ using our parametrization of $\exp \mf{n}_t = N_t \subset G$. The statement and proof of Lemma~\ref{lem:dual-format-equiv} apply in this setting as well. Again, we defer proving their acyclicity until \S\ref{sec:proofres}. We have the following conjecture, c.f. \cite{WGor4}.

\begin{conj}[Genericity conjecture for codimension four Gorenstein]\label{conj:gen4}
	Let $S$ be a local $\mb{C}$-algebra and suppose that
	\[
		\mb{G}\colon 0 \to S \to S^{q+2} \to S^{2q+2} \to S^{q+2} \to S
	\]
	resolves $S/I$ for some perfect ideal $I$. Then there exists a homomorphism $\phi\colon R_t \to S$ such that $\phi^* \mb{F}_t \cong \mb{G}$.
\end{conj}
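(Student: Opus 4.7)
The plan is to leverage the deep connection to Weyman's generic ring $\hat{R}_{\mathrm{gen}}$ alluded to in the introduction. By construction, $\hat{R}_{\mathrm{gen}}$ is universal for resolutions of the given format $(1,q+2,2q+2,q+2,1)$: any $\mb{G}$ over a local $\mb{C}$-algebra $S$ corresponds to a homomorphism $\psi \colon \hat{R}_{\mathrm{gen}} \to S$ with $\mb{G} \cong \psi^* \mb{F}_{\mathrm{gen}}$ for a universal complex $\mb{F}_{\mathrm{gen}}$. Since $\mb{F}_t$ is itself a family of resolutions parametrized by $R_t$, it corresponds to a homomorphism $\Phi \colon \hat{R}_{\mathrm{gen}} \to R_t$ with $\Phi^* \mb{F}_{\mathrm{gen}} \cong \mb{F}_t$ up to the basis ambiguity of Lemma~\ref{lem:dual-format-equiv}. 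The conjecture then becomes equivalent to showing that every such $\psi$ factors through $\Phi$: given a factorization $\psi = \phi \circ \Phi$, the resulting $\phi \colon R_t \to S$ automatically satisfies $\phi^* \mb{F}_t \cong \mb{G}$ by functoriality.

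First, I would construct $\Phi$ by matching generators. Those of $\hat{R}_{\mathrm{gen}}$ arise as coefficients of universal self-dual structure maps, while the coordinates of $\mf{n}_t^*$ are indexed by negative roots in the $t$-grading and the differentials of $\mb{F}_t$ are the matrix entries of $\exp \sigma_V(L_t)$ for $V \in \{V(\omega_{z_2}), V(\omega_{y_{q-1}})\}$. The defining relations of $\hat{R}_{\mathrm{gen}}$ should pull back to identities in $R_t$ by the same sort of representation-theoretic decomposition of tensor and symmetric squares used in the preceding lemma, combined with the acyclicity of $\mb{F}_t$ to be established later.

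The crux is then producing the factorization. For the $E_6$ case ($q=4$) it can be bypassed: invoking the classical result that six-generated codimension-four Gorenstein ideals are hypersurface sections of codimension-three Gorenstein ideals and applying Buchsbaum-Eisenbud produces $\phi$ directly, both verifying the conjecture in that case and serving as a sanity check that the framework recovers the known structure theorem. For $E_7$ and $E_8$ one must argue intrinsically, essentially by showing that $\ker \Phi \subset \ker \psi$ for every valid $\psi$; equivalently, that every generator forced into $\hat{R}_{\mathrm{gen}}$ by higher syzygies can be expressed in the coordinates of $\mf{n}_t$. This is the main obstacle: Weyman's generic ring is built by iteratively adjoining generators to accommodate deeper syzygies, so controlling all the layers demands intricate understanding of the $\mf{g}$-module structure on each, a representation-theoretic problem intrinsic to the exceptional Dynkin types. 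The authors' decision to defer the full connection to a forthcoming paper signals that this is precisely where the substantive new mathematics will reside.
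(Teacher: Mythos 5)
The statement you were asked to prove is, in the paper itself, an \emph{open conjecture}: it is labeled Conjecture~\ref{conj:gen4}, the authors offer no proof, and the surrounding text explicitly says only that the codimension-four genericity claim is verified for $n\le 5$ (where the complexes are Koszul or Koszul plus split), that for $n=6$ it ``coincides with a well-known (and mostly settled) conjecture,'' and that the generic-ring connection motivating it is deferred to a forthcoming paper. There is therefore no proof in the paper against which your argument could match, and your proposal should have begun by flagging that the statement is not a theorem of the paper.

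As an outline of the authors' \emph{intended} strategy, your proposal is on target: the introduction does say the motivation comes from Weyman's generic ring $\hat R_{\mathrm{gen}}$, and the reduction to producing a factorization $\psi = \phi\circ\Phi$ through a map $\Phi\colon \hat R_{\mathrm{gen}}\to R_t$ is exactly the shape of the argument one would expect. You also correctly identify that the hard part is showing every $\psi$ factors through $\Phi$, i.e.\ controlling the full tower of generators of $\hat R_{\mathrm{gen}}$, and you candidly admit you do not do this. So what you have is a plan, not a proof — and that is the correct assessment, since the conjecture is genuinely open. Two smaller inaccuracies: (1) the $n=6$ (i.e.\ $q=4$, $E_6$) case cannot be ``bypassed by invoking the classical result'' — the hypersurface-section statement for six-generated codimension-four Gorenstein ideals is itself only proved under additional hypotheses (\cite{HM85}, \cite{VV86}), so even there Conjecture~\ref{conj:gen4} is not fully established in the generality stated; (2) the basis ambiguity you appeal to is not what Lemma~\ref{lem:dual-format-equiv} says — that lemma relates $\mb F_{x_1}$ and $\mb F_{z_2}$ by an automorphism of the base ring, not by a change of basis of the free modules.
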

If proven, this conjecture would yield structure theorems for Gorenstein ideals of codimension four on up to $n=8$ generators. As we will see in \S\ref{sec:Gor4}, this conjecture certainly holds for $n \leq 5$, and for $n = 6$ it coincides with a well-known (and mostly settled) conjecture.

\section{Perfect ideals of codimension three}\label{sec:length3}
We now describe the results of the construction explicitly for each of the Dynkin cases. In this section we consider the first family of examples, which yield (conjecturally generic) resolutions of length three resolving perfect ideals. The following resolutions have been fully implemented in Macaulay2 for the formats associated to $D_n$, $E_6$, and $E_7$. For $E_8$ unfortunately, while the adjoint $V(\omega_8)$ is manageable, the representations $V(\omega_2)$ and $V(\omega_1)$ are enormous and so we have only been able to implement one of the three differentials.

The simplest example is actually $A_n$, i.e. when the right arm of the $T$-shaped graph has length zero. This case is not particularly interesting, and there are many others to discuss, so we omit it for brevity. The interested reader can check that Conjecture~\ref{conj:gen3} is evidently correct in this case: the complex obtained resolves a complete intersection on three generators.

For $D_n$, we go through the construction of \S2 in great detail and show how it recovers familiar complexes. When we move on to $E_n$, we shift our focus to describing the final output as explicitly as possible, with the understanding that the construction itself is completely analogous.

As mentioned in \S\ref{sec:general-construction}, we obtain two equivalent complexes $\mb{F}_{x_1}$ and $\mb{F}_{z_1}$ for each format. The complex $\mb{F}_{x_1}$ is simpler to describe explicitly, so we will focus on it.

\subsection{$D_n$}

Here we consider the Lie algebra $\mf{g} = \mf{so}(2n)$ associated to the Dynkin diagram $D_n$. The three fundamental representations of interest are the standard representation $V(\omega_1)$ and the half-spinor representations $V(\omega_{n-1})$ and $V(\omega_n)$. If $n$ is even, then each of these representations is self-dual. However, if $n$ is odd, then the half-spinor representations $V(\omega_{n-1})$ and $V(\omega_n)$ are dual to each other. We will consider the case of $n$ even first, and then briefly describe the changes necessary for the case of $n$ odd afterwards.

\subsubsection{Vertices $(n,n-1)$, format $(1,n,n,1)$, $n$ even}\label{subsec:1nn1-n-even}
For this format, we draw the Dynkin diagram as
\[\begin{tikzcd}
\colorA{n} \ar[r,dash]& n-2 \ar[r,dash] & n-3 \ar[r,dash] & \cdots \ar[r,dash] & 2 \ar[r,dash] & 1\\
& \colorB{n-1} \ar[u,dash]
\end{tikzcd}\]
to match the ``$T_{p,q,r}$'' convention from e.g. \cite{W18}. The subalgebras $\mf{sl}(F_1)$ and $\mf{sl}(F_2)$ of $\mf{g}$ are as in \S\ref{sec:general-construction}, with $F_1 = F_2 = \mb{C}^n$.
Note that $\mf{g} = \mf{so}(F_1 \oplus F_1^*)$ (or $\mf{so}(F_2 \oplus F_2^*)$).
The decompositions of $V(\omega_{n-1})$, $V(\omega_1)$, and $V(\omega_n)$ in the gradings induced by the simple roots $\alpha_{n-1}$ and $\alpha_n$ are displayed below. The boxed components will constitute the modules in our free resolution.

\begin{align*}
V(\omega_{n-1}) &= \mb{C} \oplus \bigwedge^2 F_1 \oplus \cdots \oplus \colorB{\boxed{\bigwedge^n F_1}}\\
&= \colorA{\boxed{F_2}} \oplus \bigwedge^3 F_2 \oplus \cdots \oplus \bigwedge^{n-1} F_2\\
V(\omega_1) &= F_1^* \oplus \colorB{\boxed{F_1}}\\
&= \colorA{\boxed{F_2^*}} \oplus F_2\\
V(\omega_n) &= F_1 \oplus \bigwedge^3 F_1 \oplus \cdots \oplus \colorB{\boxed{\bigwedge^{n-1} F_1}}\\
&= \colorA{\boxed{\mb{C}}} \oplus \bigwedge^2 F_2 \oplus \cdots \oplus \bigwedge^n F_2
\end{align*}

Let $\mf{n}$ be the negative part of $\mf{g}$ in the $(n-1)$-grading\footnote{There is also the option of taking the negative part of $\mf{g}$ in the $n$-grading instead, but the symmetry of the nodes $n$ and $n-1$ on $D_n$ makes it redundant. The complex obtained in that case is dual to the one presented here.}, i.e.
\[
	\mf{n} = \bigwedge^2 F_1^*,\\
\]
and let $R = \operatorname{Sym}_\mb{C}(\mf{n}^*)$ be its ($\mb{Z}^n$-graded) coordinate ring. (From the perspective of the coarser $(n-1)$-grading, this is an ordinary polynomial ring with all generators in degree 1.)

To be more explicit, we will pick bases according to the following convention.
\begin{definition}\label{def:std-basis}
	If $\mf{sl}(F)$ corresponds to the nodes $t_1,\ldots,t_{n-1}$ on the Dynkin diagram, we will say the ordered list $e_1,\ldots,e_n$ is a \emph{standard basis} of $F$ if $e_1$ is a highest weight vector and $e_2,\ldots,e_n$ are obtained from $e_1$ by sequentially applying the Lie algebra generators corresponding to the roots $-\alpha_{t_1},\ldots,-\alpha_{t_{n-1}}$.
\end{definition}

Let $e_1,\ldots,e_n$ be a standard basis of $F_1$. Then
\[
	R = \mb{C}[\{b_{ij}\}_{1 \leq i < j \leq n}]
\]
where $b_{ij}$ corresponds to $e_i \wedge e_j$.

We will construct a resolution of the form
\[
0 \to \colorB{\mb{C}} \otimes R \xto{\exp_{n-1}} \colorA{F_2} \otimes R \xto{\exp_1^*} \colorB{F_1^*}\otimes R \xto{\exp_n} \colorA{\CC}\otimes R
\]
where $\exp_i$ means the action of $\exp \mathfrak{n}$ on $V(\omega_i)$, which we explain shortly.

To describe the resolution, we begin with the differential $d_2\colon F_2 \otimes R \to F_1^* \otimes R$ since it comes from the smallest representation of the three: the standard representation $V(\omega_1)$. In the $(n-1)$-grading, this representation has only two graded components: $F_1^*$ and $F_1$. The action of $\mf{n}$ on $V(\omega_1)$ is the evident contraction map
\[
	(\bigwedge^2 F_1^*) \otimes F_1 \to F_1^*.
\]
That is, an element of $\mf{n}$ is equivalently an alternating map $F_1 \to F_1^*$, and its action on $V(\omega_1)$ is precisely that map. With our coordinates $b_{ij}$ on $\mf{n}$, the action of a generic element of $\mf{n}$ is the strictly upper-triangular block matrix
\[
	X = \begin{bmatrix}
	0_{n\times n} & B \\
	0_{n\times n} & 0_{n\times n}
	\end{bmatrix}
\]
where $B \colon F_1 \to F_1^*$ is the generic skew matrix on the variables $b_{ij}$ and $0_{n\times n}$ means a $n\times n$ zero matrix. The exponential of this action is then
\[
	\exp X = \begin{bmatrix}
	I_n & B\\
	0_{n\times n} & I_n
	\end{bmatrix}
\]
As per \S2, we take $d_2^*$ to be the part of this matrix mapping from the top piece in the $(n-1)$-grading, $F_1$, to the bottom piece in the $n$-grading, $F_2^*$. Writing $\epsilon_1,\ldots,\epsilon_n$ for the dual basis of $e_1,\ldots,e_n$, a basis of $F_2^*$ in $V(\omega_1)$ is given by $\epsilon_1,\ldots,\epsilon_{n-1} \in F_1^*$ together with $e_n \in F_1$.

Thus
\[
	d_2 = \begin{bmatrix}
	0 & -b_{12} & \cdots & -b_{1(n-1)} & 0\\
	b_{12} & 0 & \cdots & -b_{2(n-1)} & 0\\
	\vdots & \vdots & \ddots & \vdots & \vdots \\
	b_{1(n-1)} & b_{2(n-1)} & \cdots & 0 & 0\\
	b_{1n} & b_{2n} & \cdots & b_{(n-1)n} & -1
	\end{bmatrix}
\]
\begin{remark}
	Many steps of the construction involve identifying representations explicitly. Such isomorphisms can be adjusted by any nonzero scalar, so there is a considerable amount of freedom in signs and coefficients throughout.
\end{remark}
For the differential $d_3$, we turn to the representation $V(\omega_{n-1})$. The action of $\mf{n}$ on $V(\omega_{n-1})$ is given by the contractions
\[
	\bigwedge^2 F_1^* \otimes \bigwedge^{k} F_1 \to \bigwedge^{k-2} F_1.
\]
In terms of the coordinates $b_{ij}$ on $\mf{n}$, this action sends $e_1 \wedge \cdots \wedge e_k \in \bigwedge^k F_1$ to
\begin{equation}\label{eq:halfspinoraction1}
	\sum_{1 \leq i < j \leq k} (-1)^{i+j} b_{ij} \Big(\bigwedge_{\substack{1 \leq r \leq k\\r \neq i,j}} e_r\Big) \in \bigwedge^{k-2} F_1
\end{equation}
and analogously for other basis elements of $\bigwedge^k F_1$. Denote this action by $X$. It is straightforward to show that
\begin{equation}\label{eq:halfspinoraction2}
	X^s(e_1\wedge \cdots \wedge e_n) = s! \sum_{\substack{I \subseteq \{1,\ldots,n\}\\|I|=2s}} \pm P_I \Big(\bigwedge_{\substack{1 \leq r \leq k\\r \notin I}} e_r\Big) \in \bigwedge^{n - 2s} F_1
\end{equation}
where $P_I$ denotes the Pfaffian of the submatrix of $B$ with row and column indices from $I$.

The differential $d_3$ is the part of $\exp X$ mapping from the top part of $V(\omega_{n-1})$ in the $(n-1)$-grading, $\bigwedge^n F_1$, to the bottom part in the $n$-grading, $F_2$. The latter, from the perspective of the $(n-1)$-grading, has basis given by $e_i \wedge e_n \in \bigwedge^2 F_1$ ($1 \leq i \leq n-1$) together with $1 \in \mb{C}$. Thus our differential is
\[
	d_3 = \begin{bmatrix}
	P_{\{2,3,4,\ldots,n-2,n-1\}}\\
	-P_{\{1,3,4,\ldots,n-2,n-1\}}\\
	P_{\{1,2,4,\ldots,n-2,n-1\}}\\
	\vdots\\
	-P_{\{1,2,3,\ldots,n-3,n-1\}}\\
	P_{\{1,2,3,\ldots,n-3,n-2\}}\\
	P_{\{1,2,3,\ldots,n-3,n-2,n-1,n\}}
	\end{bmatrix}
\]
Note that the first $n-1$ entries are $(n-2)\times(n-2)$ sub-Pfaffians of $B$, while the last entry is the Pfaffian of the whole matrix $B$.

Finally, there is the differential $d_1$ coming from the other half-spinor representation $V(\omega_n)$. As the situation is similar to that of $d_3$, we will be brief and just state the answer:
\[
	d_1 = \begin{bmatrix}
	-P_{\{2,3,4,\ldots,n-2,n-1\}} &
	P_{\{1,2,4,\ldots,n-2,n-1\}} &
	\cdots &
	P_{\{1,2,3,\ldots,n-3,n-1\}} &
	-P_{\{1,2,3,\ldots,n-3,n-2\}} & 0
	\end{bmatrix}
\]
The resolution we've constructed is evidently non-minimal. But in view of Conjecture~\ref{conj:gen3}, this is to be expected, as Gorenstein ideals of codimension three are minimally generated by an \emph{odd} number of elements, whereas $n$ here is even \cite{Watanabe}.

\begin{remark}\label{rem:1nn1-even-odd-comparison}
	This resolution was also produced in \cite[Theorem 4.1]{GW20}, though by different means. Moreover, by deleting the last zero entry of $d_1$, the last column of $d_2$, and the last entry of $d_3$ (the Pfaffian of $B$), we obtain an equivalent minimal resolution of format $(1,n-1,n-1,1)$ which is none other than the familiar generic example from \cite{BE77}. This minimal resolution is also the output of our construction when applied in the case of $n$ odd in \S\ref{subsec:1nn1-n-odd}.
\end{remark}

\subsubsection{Vertices $(n,n-1)$, format $(1,n,n,1)$, $n$ odd}\label{subsec:1nn1-n-odd}
We use the same notation as in the preceding subsection, except now we assume that $n$ is odd.

For this case, the decompositions of $V(\omega_{n})$, $V(\omega_1)$, and $V(\omega_{n-1})$ in the gradings induced by the simple roots $\alpha_{n-1}$ and $\alpha_n$ are displayed below.
\begin{align*}
V(\omega_{n-1}) &= F_1 \oplus \bigwedge^3 F_1 \oplus \cdots \oplus \colorB{\boxed{\bigwedge^{n} F_1}}\\
&= \colorA{\boxed{\mb{C}}} \oplus \bigwedge^2 F_2 \oplus \cdots \oplus \bigwedge^{n-1} F_2\\
V(\omega_1) &= F_1^* \oplus \colorB{\boxed{F_1}}\\
&= \colorA{\boxed{F_2^*}} \oplus F_2\\
V(\omega_{n}) &= \mb{C} \oplus \bigwedge^2 F_1 \oplus \cdots \oplus \colorB{\boxed{\bigwedge^{n-1} F_1}}\\
&= \colorA{\boxed{F_2}} \oplus \bigwedge^3 F_2 \oplus \cdots \oplus \bigwedge^{n} F_2
\end{align*}
Here we run into an obstacle, which is that the boxed components above no longer match up as they did for the case of $n$ even. The construction before would yield two $n\times n$ matrices from $V(\omega_{n})$ and $V(\omega_1)$, and a $1\times 1$ matrix from $V(\omega_{n-1})$, which evidently do not piece together into a $(1,n,n,1)$ resolution.

The adjustment mentioned in Remark~\ref{rem:dual-node} remedies this. The vertices $n$ and $n-1$ are dual on the Dynkin diagram. If instead of the bottom pieces in the $n$-grading, we use the bottom pieces in the $(n-1)$-grading, we get
\begin{align*}
V(\omega_{n-1}) &= \colorA{\boxed{F_1}} \oplus \bigwedge^3 F_1 \oplus \cdots \oplus \colorB{\boxed{\bigwedge^{n} F_1}}\\
V(\omega_1) &= \colorA{\boxed{F_1^*}} \oplus \colorB{\boxed{F_1}}\\
V(\omega_{n}) &= \colorA{\boxed{\mb{C}}} \oplus \bigwedge^2 F_1 \oplus \cdots \oplus \colorB{\boxed{\bigwedge^{n-1} F_1}}
\end{align*}
Let $\mf{n}$ be the negative part of $\mf{g}$ in the $(n-1)$-grading, and let $R$ be its coordinate ring. By proceeding analogously to \S\ref{subsec:1nn1-n-even}, one obtains a resolution
\[
0 \to \colorB{\mb{C}} \otimes R \xto{\exp_{n}} \colorA{F_1} \otimes R \xto{\exp_1^*} \colorB{F_1^*}\otimes R \xto{\exp_{n-1}} \colorA{\CC}\otimes R
\]
Since the explicit details of the construction are nearly identical to what was described for the case of $n$ even, we omit them for brevity. The difference in output is described in Remark~\ref{rem:1nn1-even-odd-comparison}: one obtains the generic resolution of a Gorenstein ideal of codimension three on $n$ generators as per \cite{BE77}.

\subsubsection{Vertices $(n,n-3)$, format $(1,4,n,n-3)$, $n$ even}\label{subsec:14nx-n-even}
For this format, we draw the Dynkin diagram as
\[\begin{tikzcd}
\colorA{n} \ar[r,dash]& n-2 \ar[r,dash] & n-1\\
& \colorB{n-3} \ar[u,dash]\\
& \vdots \ar[u,dash]\\
& 1 \ar[u,dash]\\
\end{tikzcd}\]

We fix subalgebras $\mf{sl}(F_1)$, $\mf{sl}(F_2)$, and $\mf{sl}(F_3)$ of $\mf{g}$ as in \S\ref{sec:general-construction}, where $F_1 = \mb{C}^4$, $F_2 = \mb{C}^n$, and $F_3 = \mb{C}^{n-3}$.

The decompositions of $V(\omega_{1})$, $V(\omega_{n-1})$, and $V(\omega_n)$ in the gradings induced by the simple roots $\alpha_{n-3}$ and $\alpha_n$ are displayed below.

\begin{align*}
V(\omega_1) &= F_3 \oplus \bigwedge^2 F_1 \oplus \colorB{\boxed{F_3^* \otimes \bigwedge^4 F_1}}\\
&= \colorA{\boxed{F_2}} \oplus F_2^*\\
V(\omega_{n-1}) &= F_1^* \oplus F_3^* \otimes F_1 \oplus \bigwedge^2 F_3^* \otimes F_1^* \oplus \cdots \oplus \colorB{\boxed{\bigwedge^{n-3}F_3^* \otimes F_1}}\\
&= \colorA{\boxed{F_2^*}} \oplus \bigwedge^3 F_2^* \oplus \cdots \oplus \bigwedge^{n-1} F_2^*\\
V(\omega_n) &= F_1 \oplus F_3^* \otimes F_1^* \oplus \bigwedge^2 F_3^* \otimes F_1 \cdots \oplus \colorB{\boxed{\bigwedge^{n-3} F_3^* \otimes F_1^*}}\\
&= \colorA{\boxed{\mb{C}}} \oplus \bigwedge^2 F_2^* \oplus \cdots \oplus \bigwedge^n F_2^*
\end{align*}

We can either take $\mf{n}$ to be the negative part of $\mf{g}$ in the $n$-grading, or the negative part of $\mf{g}$ in the $(n-3)$-grading. Although the two resulting complexes appear quite different, they are equivalent in the sense of Lemma~\ref{lem:dual-format-equiv}.

Let us take the former option for now, so that
\[
	\mf{n} = \bigwedge^2 F_2
\]
with ($\mb{Z}^n$-graded) coordinate ring $R = \operatorname{Sym}_\mb{C}(\mf{n}^*)$. (From the perspective of the coarser $n$-grading, this is an ordinary polynomial ring with all generators in degree 1.) Let $f_1,\ldots,f_n$ be a standard basis of $F_2$. Then
\[
R = \mb{C}[\{x_{ij}\}_{1 \leq i < j \leq n}]
\]
with $x_{ij}$ dual to $f_i \wedge f_j$.

We construct a resolution of the form
\[
0 \to \colorB{F_3^*} \otimes R \xto{\exp_1} \colorA{F_2} \otimes R \xto{\exp_{n-1}^*} \colorB{F_1^*}\otimes R \xto{\exp_n} \colorA{\CC}\otimes R.
\]
The differential $d_3$ comes from the standard representation $V(\omega_1)$ so it is the simplest. The action of $\mf{n} = \bigwedge^2 F_2$ on $V(\omega_1) = F_2 \oplus F_2^*$ is given by the matrix
\[
	X = \begin{bmatrix}
	0_{n\times n} & B\\
	0_{n\times n} & 0_{n\times n}
	\end{bmatrix}, \qquad \exp X = \begin{bmatrix}
	I_n & B\\
	0_{n\times n} & I_n
	\end{bmatrix}
\]
where $B$ is the generic skew matrix on the variables $x_{ij}$. We want the part of $\exp X$ mapping from $F_3^* \otimes \bigwedge^4 F_1$ to $F_2$. Let $\phi_1,\ldots,\phi_n \in F_2^*$ be the dual basis of $f_1,\ldots,f_n$. The component $F_3^* \otimes \bigwedge^4 F_1 \subset V(\omega_1)$ has basis given by $\phi_4,\phi_5,\ldots,\phi_n$. Thus the differential $d_3$ is comprised of the last $n-3$ columns of $B$:
\[
	d_3 = \begin{bmatrix}
	-x_{14} & -x_{15} & \cdots & -x_{1n}\\
	-x_{24} & -x_{25} & \cdots & -x_{2n}\\
	-x_{34} & -x_{35} & \cdots & -x_{3n}\\
	0 & -x_{45} & \cdots & -x_{4n}\\
	x_{45} & 0 & \cdots & -x_{5n}\\
	\vdots & \vdots & \ddots & \vdots\\
	x_{4n} & x_{5n} & \cdots & 0
	\end{bmatrix}
\]
For the half-spinor representations, the action $X$ of $\mf{n}$ is described in \eqref{eq:halfspinoraction1} and \eqref{eq:halfspinoraction2} (replacing $e$ with $\phi$ and $b$ with $x$ throughout). The differential $d_2$ is (dual to) the part of $\exp X$ mapping from $\bigwedge^{n-3}F_3^* \otimes F_1 \subset V(\omega_{n-1})$ to $F_2^* \subset V(\omega_{n-1})$. For brevity, let $\phi_I = \bigwedge_{i\in I} \phi_i$ and $[n]=\{1,\ldots,n\}$. A basis for $\bigwedge^{n-3}F_3^* \otimes F_1 \subset V(\omega_{n-1})$ is given by
\[
	\phi_{[n]\setminus \{1\}}, \phi_{[n]\setminus \{2\}}, \phi_{[n]\setminus \{3\}} \in \bigwedge^{n-1}F_2^* \text{ and } \phi_{4,\ldots,n} \in \bigwedge^{n-3} F_2^*.
\]
Thus our matrix $d_2$ is comprised of the following Pfaffians of $B$:
\[
d_2 = \begin{bmatrix}
0 & P_{[n]\setminus \{1,2\}} & -P_{[n]\setminus \{1,3\}} & P_{[n]\setminus \{1,4\}} & -P_{[n]\setminus \{1,5\}} & \cdots & P_{[n]\setminus \{1,n\}} \\
-P_{[n]\setminus \{1,2\}} & 0 & P_{[n]\setminus \{2,3\}} & -P_{[n]\setminus \{2,4\}} & P_{[n]\setminus \{2,5\}} & \cdots & -P_{[n]\setminus \{2,n\}} \\
P_{[n]\setminus \{1,3\}} & -P_{[n]\setminus \{2,3\}} & 0 & P_{[n]\setminus \{3,4\}} & -P_{[n]\setminus \{3,5\}} & \cdots & P_{[n]\setminus \{3,n\}} \\
0 & 0 & 0 & P_{[n]\setminus \{1,2,3,4\}} & -P_{[n]\setminus \{1,2,3,5\}} & \cdots & P_{[n]\setminus \{1,2,3,n\}} 
\end{bmatrix}.
\]
Lastly, $d_1$ is the part of $\exp X$ mapping from $\bigwedge^{n-3}F_3^* \otimes F_1^* \subset V(\omega_n)$ to $\mb{C} \subset V(\omega_n)$. A basis for the former inside of $V(\omega_n)$ is given by
\[
	 \phi_{[n]\setminus \{2,3\}},\phi_{[n]\setminus \{1,3\}},\phi_{[n]\setminus \{1,2\}}\in \bigwedge^{n-2}F_2^* \text{ and } \phi_{1,\ldots,n} \in \bigwedge^n F_2^*.
\]
So we have
\[
	d_1 = \begin{bmatrix}
	-P_{[n]\setminus \{2,3\}} & -P_{[n]\setminus \{1,3\}} & -P_{[n]\setminus \{1,2\}} & P_{[n]}
	\end{bmatrix}.
\]
 
By our representation-theoretic construction, we have recovered the exact same resolution described in \cite[Proposition 3.2]{annebrown}. There it is proven that this resolution is generic for grade 3 almost complete intersections of odd type $n-3$.

One could repeat the preceding, taking $\mf{n}$ to be the negative part of $\mf{g}$ in the $(n-3)$-grading instead. In this case, the situation becomes slightly more complicated, because $\mf{n}$ is concentrated in two degrees:
\[
	\mf{n} = \underset{-2}{\Big(\bigwedge^2 F_3 \otimes \bigwedge^4 F_1^*\Big)} \oplus \underset{-1}{\Big(F_3 \otimes \bigwedge^2 F_1^*\Big)}.\\
\]
The resulting complex is then over the base ring $R=\mb{C}[\{b_{ij,k}\}_{1 \leq i < j \leq 4, 1 \leq k \leq n-3}, \{c_{hk}\}_{1 \leq h < k \leq n-3}]$, where $|b_{ij,k}|=1$ and $|c_{hk}|=2$ (in the coarser $(n-3)$-grading on $R$).

As the examples considered previously already illustrate how our construction for type $D_n$ recovers classical complexes (our primary goal for this section), we will not delve into this current example any further. For the curious, the resulting complex can be found in \cite[Theorem 5.1]{GW20}, although it was produced there by different means.

\subsubsection{Vertices $(n,n-3)$, format $(1,4,n,n-3)$, $n$ odd}\label{subsec:14nx-n-odd}
Lastly, we have the analogue of \S\ref{subsec:14nx-n-even} for $n$ odd. Since duality on the diagram $D_n$ interchanges the vertices $n-1$ and $n$ in this case, we will consider the bottom pieces in the $(n-1)$-grading instead of the $n$-grading as per Remark~\ref{rem:dual-node}.

Accordingly, to describe the graded components in the $(n-1)$-grading, we fix the subalgebra $\mf{sl}(F_2) \subset \mf{g}$ corresponding to the nodes $n, n-2, n-3, \ldots, 1$, where $F_2 = \mb{C}^n$. The subalgebras $\mf{sl}(F_1)$ and $\mf{sl}(F_3)$ follow the usual convention.

The decompositions of $V(\omega_{1})$, $V(\omega_{n-1})$, and $V(\omega_n)$ in the gradings induced by the simple roots $\alpha_{n-3}$ and $\alpha_{n-1}$ are displayed below.

\begin{align*}
V(\omega_1) &= F_3 \oplus \bigwedge^2 F_1 \oplus \colorB{\boxed{F_3^* \otimes \bigwedge^4 F_1}}\\
&= \colorA{\boxed{F_2}} \oplus F_2^*\\
V(\omega_{n-1}) &= F_1 \oplus F_3^* \otimes F_1^* \oplus \bigwedge^2 F_3^* \otimes F_1 \cdots \oplus \colorB{\boxed{\bigwedge^{n-3} F_3^* \otimes F_1}}\\
&= \colorA{\boxed{F_2^*}} \oplus \bigwedge^3 F_2^* \oplus \cdots \oplus \bigwedge^{n} F_2^*\\
V(\omega_n) &= F_1^* \oplus F_3^* \otimes F_1 \oplus \bigwedge^2 F_3^* \otimes F_1^* \oplus \cdots \oplus \colorB{\boxed{\bigwedge^{n-3}F_3^* \otimes F_1^*}}\\
&= \colorA{\boxed{\mb{C}}} \oplus \bigwedge^2 F_2^* \oplus \cdots \oplus \bigwedge^{n-1} F_2^*
\end{align*}

The rest is quite similar to the case of $n$ even, so we will be brief. Let $\mf{n} = \bigwedge^2 F_2$ be the negative part of $\mf{g}$ in the $(n-1)$-grading. Its ($\mb{Z}^n$-graded) coordinate ring is $R = \operatorname{Sym}_\mb{C}(\mf{n}^*)$. Taking explicit variables as we did for $n$ even, we have
\[
R = \mb{C}[\{x_{ij}\}_{1 \leq i < j \leq n}].
\]
Just as it was for the case of $n$ even, the differential $d_3$ is given by the last $n-3$ columns of the generic skew matrix $B$ on the variables $x_{ij}$.
\[
d_3 = \begin{bmatrix}
-x_{14} & -x_{15} & \cdots & -x_{1n}\\
-x_{24} & -x_{25} & \cdots & -x_{2n}\\
-x_{34} & -x_{35} & \cdots & -x_{3n}\\
0 & -x_{45} & \cdots & -x_{4n}\\
x_{45} & 0 & \cdots & -x_{5n}\\
\vdots & \vdots & \ddots & \vdots\\
x_{4n} & x_{5n} & \cdots & 0
\end{bmatrix}
\]
The differential $d_2$ is
\[
d_2 = \begin{bmatrix}
P_{[n]\setminus \{1\}} & -P_{[n]\setminus \{2\}} & P_{[n]\setminus \{3\}} & -P_{[n]\setminus \{4\}} & P_{[n]\setminus \{5\}} & \cdots & P_{[n]\setminus \{n\}}\\
0 & 0 & P_{[n]\setminus \{1,2,3\}} & -P_{[n]\setminus\{1,2,4\}} & P_{[n]\setminus\{1,2,5\}} & \cdots & P_{[n]\setminus\{1,2,n\}}\\
0 & P_{[n]\setminus \{1,2,3\}} & 0 & P_{[n]\setminus\{1,3,4\}} & -P_{[n]\setminus\{1,3,5\}} & \cdots & -P_{[n]\setminus\{1,3,n\}}\\
P_{[n]\setminus \{1,2,3\}} & 0 & 0 & -P_{[n]\setminus\{2,3,4\}} & P_{[n]\setminus\{2,3,5\}} & \cdots & P_{[n]\setminus\{2,3,n\}}
\end{bmatrix}
\]
and finally
\[
d_1 = \begin{bmatrix}
-P_{[n]\setminus \{1,2,3\}} & P_{[n] \setminus \{3\}} & P_{[n] \setminus \{2\}} & P_{[n] \setminus \{1\}}
\end{bmatrix}.
\]
Once again, this coincides with the complex described in \cite[Proposition 3.3]{annebrown}, corroborating Conjecture~\ref{conj:gen3} in this case.

\subsection{$E_6$}
In this subsection we consider the Lie algebra $\mf{g}$ associated to the Dynkin diagram $E_6$:
\[\begin{tikzcd}
\colorA{2} \ar[r,dash]& 4 \ar[r,dash] & 5 \ar[r,dash] & 6\\
& \colorB{3} \ar[u,dash]\\
& 1 \ar[u,dash]
\end{tikzcd}\]
We take $\mf{sl}(F_1), \mf{sl}(F_3) \subset \mf{g}$ as usual, but in view of Remark~\ref{rem:dual-node} we take $\mf{sl}(F_2)$ corresponding to the nodes $1,3,4,5,6$ as opposed to $6,5,4,3,1$. If we took the latter, then all instances of $F_2$ below should be replaced with $F_2^*$ and vice versa. 

The three fundamental representations of interest are $V(\omega_1)$, $V(\omega_6)$, and $V(\omega_2)$. Their decompositions in the $3$-grading and $2$-grading are displayed below.
\begin{align*}
	V(\omega_1) &= {F_1^*} \oplus {F_3^* \otimes F_1} \oplus {\bigwedge^2 F_3^* \otimes \bigwedge^3 F_1} \oplus {\colorB{\boxed{S_{2,1}F_3^* \otimes \bigwedge^5 F_1}}}\\
	&= {\colorA{\boxed{F_2}}} \oplus {\bigwedge^2 F_2^*} \oplus {\bigwedge^5 F_2^*}\\
	V(\omega_6) &= {F_3} \oplus {\bigwedge^2 F_1} \oplus {F_3^* \otimes \bigwedge^4 F_1} \oplus {\colorB{\boxed{\bigwedge^2 F_3^* \otimes S_{2,1^4} F_1}}}\\
	&= {\colorA{\boxed{F_2^*}}} \oplus {\bigwedge^4 F_2^*} \oplus {S_{2,1^5} F_2^*}\\
	V(\omega_2) &= {F_1} \oplus \cdots \oplus {\colorB{\boxed{S_{2,2}F_3^* \otimes S_{2^4,1} F_1}}}\\
	&= {\colorA{\boxed{\CC}}} \oplus \cdots \oplus{S_{2^6} F_2^*}
\end{align*}
The full decompositions of $V(\omega_2)$ can be found in \cite{LW19}.

To define $\mb{F}_i$ ($i = 2,3$), let $\mf{n}$ be the negative part of $\mf{g}$ in the $i$-grading. Let $R = \operatorname{Sym}_{\mb{C}}(\mf{n}^*)$ be the coordinate ring of $\mf{n}$ as usual. Both resolutions $\mb{F}_i$ have the form
\[
	0 \to \colorB{F_3^*} \otimes R \xto{\exp_{1}} \colorA{F_2} \otimes R \xto{\exp_6^*} \colorB{F_1^*}\otimes R \xto{\exp_2} \colorA{\CC}\otimes R
\]
where $\exp_i$ means the action of $\exp \mathfrak{n}$ on $V(\omega_i)$. The resolutions share the multigrading
\begin{align*}
0 \to
\bigoplus&\begin{matrix}
R(-(4, & \colorA{7}, & \colorB{9}, & 12, & 8, & 4))\\
R(-(5, & \colorA{7}, & \colorB{9}, & 12, & 8, & 4))
\end{matrix}
\to\\[1em]\xrightarrow{\exp_1}
\bigoplus&\begin{matrix}
R(-(3, & \colorA{5}, & \colorB{6}, & 8, & 5, & 2))\\
R(-(3, & \colorA{5}, & \colorB{6}, & 8, & 5, & 3))\\
R(-(3, & \colorA{5}, & \colorB{6}, & 8, & 6, & 3))\\
R(-(3, & \colorA{5}, & \colorB{6}, & 9, & 6, & 3))\\
R(-(3, & \colorA{5}, & \colorB{7}, & 9, & 6, & 3))\\
R(-(4, & \colorA{5}, & \colorB{7}, & 9, & 6, & 3))
\end{matrix}
\to\\[1em]\xrightarrow{\exp_6^*}
\bigoplus&\begin{matrix}
R(-(2, & \colorA{3}, & \colorB{4}, & 5, & 3, & 1))\\
R(-(2, & \colorA{3}, & \colorB{4}, & 5, & 3, & 2))\\
R(-(2, & \colorA{3}, & \colorB{4}, & 5, & 4, & 2))\\
R(-(2, & \colorA{3}, & \colorB{4}, & 6, & 4, & 2))\\
R(-(2, & \colorA{4}, & \colorB{4}, & 6, & 4, & 2))
\end{matrix}
\to\\\xrightarrow{\exp_2}
&R.
\end{align*}
We demonstrate how this grading can be inferred combinatorially, independently of computing the differentials themselves. Let $A$ denote the Cartan matrix of $E_6$. Its inverse
\[
	A^{-1} = \begin{bmatrix}
	4/3 & 1 & 5/3 & 2 & 4/3 & 2/3\\
	1 & 2 & 2 & 3 & 2 & 1\\
	5/3 & 2 & 10/3 & 4 & 8/3 & 4/3\\
	2 & 3 & 4 & 6 & 4 & 2\\
	4/3 & 2 & 8/3 & 4 & 10/3 & 5/3\\
	2/3 & 1 & 4/3 & 2 & 5/3 & 4/3
	\end{bmatrix}
\]
expresses the fundamental weights as linear combinations of the simple roots. So with respect to the simple roots, the weights in $F_1^* \subset V(\omega_2)$ have coordinates
\begin{align*}
	A^{-1}\omega_2 &= (1,2,2,3,2,1)\\
	A^{-1}\omega_2 - \alpha_2 &= (1,2,1,3,2,1)\\
	A^{-1}\omega_2 - \alpha_2 - \alpha_4 &= (1,2,1,2,2,1)\\
	A^{-1}\omega_2 - \alpha_2 - \alpha_4 - \alpha_5 &= (1,2,1,2,1,1)\\
	A^{-1}\omega_2 - \alpha_2 - \alpha_4 - \alpha_5 - \alpha_6 &= (1,2,1,2,1,0).
\end{align*}
The differential $d_1$ maps from $F_1^*$ to the lowest weight vector of $V(\omega_2)$, which has coordinates $A^{-1}(-\omega_2) = (-1,-2,-2,-3,-2,-1)$. Subtracting, we find the multigrading on $F_1^* \otimes R$ claimed above.

The weights in $F_2^* \subset V(\omega_6)$ have coordinates
\begin{align*}
	A^{-1}(-\omega_1) &= (-4/3,-1,-5/3,-2,-4/3,-2/3)\\
	A^{-1}(-\omega_1) + \alpha_1 &= (-1/3,-1,-5/3,-2,-4/3,-2/3)\\
	\vdots\\
	A^{-1}(-\omega_1) + \alpha_1 + \alpha_3 + \alpha_4 + \alpha_5 + \alpha_6 &= (-1/3,-1,-2/3,-1,-1/3,1/3).
\end{align*}
(Note that the lowest weight in $V(\omega_6)$ is $-\omega_1$.)
The highest weight in $F_1 \subset V(\omega_6)$ has coordinates $A^{-1}\omega_6 = (2/3, 1,4/3,2,5/3,4/3)$. When dualized, this should match with the lowest weight in $F_1^* \subset V(\omega_2)$. Thus the multigrading on $F_2 \otimes R$ can be obtained by subtracting the above coordinates for $F_2^* \subset V(\omega_6)$ from
\[
	(2/3,1,4/3,2,5/3,4/3) + (2,3,4,5,3,1) = (8/3,4,16/3,7,14/3,7/3).
\]
The multigrading on $F_3^* \otimes R$ can be inferred in a similar fashion by examining $V(\omega_1)$ so we omit the computation.
\begin{remark}
	Observe that by coarsening to the 2-grading (i.e. the second coordinate), we obtain the grading presented in \cite[Theorem 3.0.1]{SW21}. This connection to Schubert varieties will be explored in \S\ref{sec:proofres}.
\end{remark}

Next we describe the differentials explicitly. For $\mb{F}_2$, the nilpotent Lie algebra $\mf{n}$ is
\[
	\mf{n} = {\bigwedge^6 F_2} \oplus {\bigwedge^3 F_2}\\
\]
in degrees $-2$ and $-1$ respectively. Let $e_6,\ldots,e_1$ be a standard basis of $F_2$. Let $x_{ijk} \in \bigwedge^3 F_2^*$ be dual to $e_i \wedge e_j \wedge e_k$ and let $y \in \bigwedge^6 F_2^*$ be dual to $e_1\wedge \cdots \wedge e_6$. Then our base ring is
\[
R = \mb{C}[\{x_{ijk}\}_{1 \leq i < j < k \leq 6}, y]
\]
where $|x_{ijk}| = 1$ and $|y| = 2$.

The differential $d_3 = \exp_1$ is
\[
	\begin{bmatrix}
	- x_{236}x_{146} + x_{136}x_{246} - x_{126}x_{346} & X_2 \\
	X_1 & -x_{235}x_{145} + x_{135}x_{245} - x_{125}x_{345} \\
	-x_{234}x_{146} + x_{134}x_{246} - x_{124}x_{346} & -x_{234}x_{145} + x_{134}x_{245} - x_{124}x_{345}\\
	-x_{234}x_{136} + x_{134}x_{236} - x_{123}x_{346} & -x_{234}x_{135} + x_{134}x_{235} - x_{123}x_{345}\\
	-x_{234}x_{126} + x_{124}x_{236} - x_{123}x_{246} & -x_{234}x_{125} + x_{124}x_{235} - x_{123}x_{245}\\
	-x_{134}x_{126} + x_{124}x_{136} - x_{123}x_{146} & -x_{134}x_{125} + x_{124}x_{135} - x_{123}x_{145}
	\end{bmatrix}
\]
where
\begin{align*}
	X_1 &= \frac{1}{2}\Big(
	-x_{345}x_{126} + x_{245}x_{136} - x_{145}x_{236} - x_{235}x_{146} + x_{135}x_{246}\\
	&\quad -x_{125}x_{346} - x_{234}x_{156} + x_{134}x_{256} - x_{124}x_{356} + x_{123}x_{456}\Big) - y\\
	X_2 &= \frac{1}{2}\Big(
	-x_{345}x_{126} + x_{245}x_{136} - x_{145}x_{236} - x_{235}x_{146} + x_{135}x_{246}\\
	&\quad -x_{125}x_{346} + x_{234}x_{156} - x_{134}x_{256} + x_{124}x_{356} - x_{123}x_{456}\Big) + y.
\end{align*}
Before proceeding, we comment that although we printed this differential in its entirety, we will generally not do so for subsequent examples. For one, it would take too much space for the more complicated differentials. Moreover, the highly symmetric nature of the construction renders it unnecessary: after we describe some representative entries, the remaining ones can be obtained by appropriate permutations of the indices.

When we present each differential, we first give a ``table of entries'' tabulating the number of terms in each entry. Then, we describe only a few representative entries explicitly.

The table of entries for $d_2 = \exp_6^*$ is
\[
	\begin{bmatrix}
	3 & 3 & 3 & 3 & 3 & 11\\
	3 & 3 & 3 & 3 & 11 & 3\\
	3 & 3 & 3 & 11 & 3 & 3\\
	3 & 3 & 11 & 3 & 3 & 3\\
	0 & 0 & 1 & 1 & 1 & 1\\
	\end{bmatrix}.
\]
The 11-term entry in position $(1,6)$ is
\begin{gather*}
	\frac{1}{2}\Big(x_{345}x_{126} - x_{245}x_{136} - x_{145}x_{236} + x_{235}x_{146}\\
	+ x_{135}x_{246} - x_{125}x_{346} - x_{234}x_{156} - x_{134}x_{256}\\
	+ x_{124}x_{356} - x_{123}x_{456}\Big) + y.
\end{gather*}
The 3-term entry in position $(1,1)$ is
\begin{gather*}
	x_{134}x_{125} - x_{124}x_{135} + x_{123}x_{145}.
\end{gather*}
The last row is
\[
	\begin{bmatrix}
	0 & 0 & x_{123} & -x_{124} & x_{134} & -x_{234}
	\end{bmatrix}.
\]
Finally, the table of entries for $d_1 = \exp_2$ is
\[
	\begin{bmatrix}
	17 & 17 & 17 & 17 & 86
	\end{bmatrix}
\]
The 86-term entry has the form $\Delta + y^2$, where $\Delta$ is the unique $\mf{sl}(F_2)$-invariant (up to scale) of degree 4 on $\bigwedge^3 F_1$. The first 17-term entry is $-\frac{\partial \Delta}{\partial x_{156}} + x_{234}y$, and the others can be obtained by permuting the indices $\{1,2,3,4\}$. The appearance of this invariant and its partial derivatives will be discussed in \S\ref{sec:invariants}.

\subsection{$E_7$}
In this subsection we consider the Lie algebra $\mf{g}$ associated to the Dynkin diagram $E_7$. The vertex pair $(2,3)$ yields complexes of format $(1,6,7,2)$ while the vertex pair $(2,5)$ yields complexes of format $(1,5,7,3)$. In each case, we will only describe one differential of the complex $\mb{F}_2$ explicitly.
\subsubsection{Vertices $(2,3)$, format $(1,6,7,2)$}
For this format, we draw the Dynkin diagram as
\[\begin{tikzcd}
\colorA{2} \ar[r,dash]& 4 \ar[r,dash] & 5 \ar[r,dash] & 6 \ar[r,dash] & 7\\
& \colorB{3} \ar[u,dash]\\
& 1 \ar[u,dash]
\end{tikzcd}\]
Let $F_1 = \CC^6, F_2 = \CC^7$, and $F_3 = \CC^2$. Fix the following subalgebras of $\mf{g}$:
\begin{itemize}
	\item $\mf{sl}(F_3)$ corresponding to the node $1$,
	\item $\mf{sl}(F_1)$ corresponding to the nodes $7,6,5,4,2$,
	\item $\mf{sl}(F_2)$ corresponding to the nodes $7,6,5,4,3,1$.
\end{itemize}
The three fundamental representations of interest are $V(\omega_1)$, $V(\omega_7)$, and $V(\omega_2)$. Their full decompositions can be found in \cite{LW19}.

The representation $V(\omega_1)$ is the adjoint. It has 7 graded components in the 3-grading and 5 in the 2-grading:
\begin{align*}
V(\omega_1) &= F_3 \oplus \bigwedge^2 F_1 \oplus \cdots \oplus \colorB{\boxed{S_{3,2}F_3^* \otimes S_{2^6}F_1}}\\
&= \colorA{\boxed{F_2}} \oplus \bigwedge^4 F_2 \oplus \cdots \oplus S_{2^6,1} F_2
\end{align*}
The representation $V(\omega_7)$ has 5 graded components in the 3-grading and 4 in the 2-grading:
\begin{align*}
V(\omega_7) &= F_1^* \oplus F_3^* \otimes F_1 \oplus \cdots \oplus \colorB{\boxed{S_{2,2}F_3^* \otimes S_{2,1^5}F_1}}\\
&= \colorA{\boxed{F_2^*}} \oplus \bigwedge^2 F_2 \oplus \bigwedge^5 F_2 \oplus S_{2,1^6}F_2
\end{align*}
The representation $V(\omega_2)$ has 9 graded components in the 3-grading and 8 in the 2-grading:
\begin{align*}
V(\omega_2) &= F_1 \oplus F_3^* \otimes \bigwedge^3 F_1 \oplus \cdots \oplus \colorB{\boxed{S_{4,4}F_3^* \otimes S_{3^5,2} F_1}}\\
&= \colorA{\boxed{\mb{C}}} \oplus \bigwedge^3 F_2 \oplus \cdots \oplus S_{3^7} F_2
\end{align*}
Following the construction of \S\ref{sec:general-construction}, we obtain resolutions $\mb{F}_2$ and $\mb{F}_3$ of the form
\[
0 \to \colorB{F_3^*} \otimes R \xto{\exp_{1}} \colorA{F_2} \otimes R \xto{\exp_7^*} \colorB{F_1^*}\otimes R \xto{\exp_2} \colorA{\CC}\otimes R.
\]
Both resolutions have the multigrading
\begin{align*}
0 \to
\bigoplus&\begin{matrix}
R(-(8, & \colorA{13}, & \colorB{17}, & 24, & 18, & 12, & 6))\\
R(-(9, & \colorA{13}, & \colorB{17}, & 24, & 18, & 12, & 6))
\end{matrix}
\to\\[1em]\xrightarrow{\exp_1}
\bigoplus&\begin{matrix}
R(-(5, & \colorA{9}, & \colorB{11}, & 16, & 12, & 8, & 4))\\
R(-(6, & \colorA{9}, & \colorB{11}, & 16, & 12, & 8, & 4))\\
R(-(6, & \colorA{9}, & \colorB{12}, & 16, & 12, & 8, & 4))\\
R(-(6, & \colorA{9}, & \colorB{12}, & 17, & 12, & 8, & 4))\\
R(-(6, & \colorA{9}, & \colorB{12}, & 17, & 13, & 8, & 4))\\
R(-(6, & \colorA{9}, & \colorB{12}, & 17, & 13, & 9, & 4))\\
R(-(6, & \colorA{9}, & \colorB{12}, & 17, & 13, & 9, & 5))
\end{matrix}
\to\\[1em]\xrightarrow{\exp_7^*}
\bigoplus&\begin{matrix}
R(-(4, & \colorA{6}, & \colorB{8}, & 11, & 8, & 5, & 2))\\
R(-(4, & \colorA{6}, & \colorB{8}, & 11, & 8, & 5, & 3))\\
R(-(4, & \colorA{6}, & \colorB{8}, & 11, & 8, & 6, & 3))\\
R(-(4, & \colorA{6}, & \colorB{8}, & 11, & 9, & 6, & 3))\\
R(-(4, & \colorA{6}, & \colorB{8}, & 12, & 9, & 6, & 3))\\
R(-(4, & \colorA{7}, & \colorB{8}, & 12, & 9, & 6, & 3))
\end{matrix}
\to\\\xrightarrow{\exp_2}
&R.
\end{align*}
We will only describe the differential $d_2$ of $\mb{F}_2$ explicitly. This differential is the simplest because $V(\omega_7)$ has only 4 graded components in the 2-grading (thus the entries of the differential will have degree at most 3).

To that end, let $f_1,\ldots,f_7$ be a standard basis of $F_2$ and let
\[
	R = \mb{C}[\{x_{ijk}\}_{1 \leq i<j<k\leq 7}, \{y_{1\ldots \hat{i} \ldots 7}\}_{1 \leq i \leq 7}]
\]
where $x_{ijk}$ is corresponds to $f_i \wedge f_j \wedge f_k$ and similarly for $y_{1\ldots \hat{i} \ldots 7}$.

The table of entries for $d_2$ is
\[
	\begin{bmatrix}
	15 & 35 & 35 & 35 & 35 & 35 & 35\\
	35 & 15 & 35 & 35 & 35 & 35 & 35\\
	35 & 35 & 15 & 35 & 35 & 35 & 35\\
	35 & 35 & 35 & 15 & 35 & 35 & 35\\
	35 & 35 & 35 & 35 & 15 & 35 & 35\\
	3 & 3 & 3 & 3 & 3 & 11 & 11
	\end{bmatrix}.
\]
The 15-term entry in position $(1,1)$ is
\begin{equation}\label{eq:15-term}
	\begin{split}
	x_{145}x_{136}x_{127} - x_{135}x_{146}x_{127} + x_{134}x_{156}x_{127} - x_{145}x_{126}x_{137} + x_{125}x_{146}x_{137}\\
	- x_{124}x_{156}x_{137} + x_{135}x_{126}x_{147} - x_{125}x_{136}x_{147} + x_{123}x_{156}x_{147} - x_{134}x_{126}x_{157}\\
	+ x_{124}x_{136}x_{157} - x_{123}x_{146}x_{157} + x_{134}x_{125}x_{167} - x_{124}x_{135}x_{167}+x_{123}x_{145}x_{167}.
	\end{split}
\end{equation}
The 35-term entry in position $(1,2)$ is
\begin{equation}\label{eq:35-term}
	\begin{split}
	\frac{1}{2}\Big(
	x_{245}x_{136}x_{127} + x_{145}x_{236}x_{127} - x_{235}x_{146}x_{127} - x_{135}x_{246}x_{127} + x_{234}x_{156}x_{127}\\
	+ x_{134}x_{256}x_{127} - x_{245}x_{126}x_{137} + x_{125}x_{246}x_{137} - x_{124}x_{256}x_{137} - x_{145}x_{126}x_{237}\\
	+ x_{125}x_{146}x_{237} - x_{124}x_{156}x_{237} + x_{235}x_{126}x_{147} - x_{125}x_{236}x_{147} + x_{123}x_{256}x_{147}\\
	+ x_{135}x_{126}x_{247} - x_{125}x_{136}x_{247} + x_{123}x_{156}x_{247} - x_{234}x_{126}x_{157} + x_{124}x_{236}x_{157}\\
	- x_{123}x_{246}x_{157}- x_{134}x_{126}x_{257} + x_{124}x_{136}x_{257} - x_{123}x_{146}x_{257} + x_{234}x_{125}x_{167}\\
	- x_{124}x_{235}x_{167} + x_{123}x_{245}x_{167} + x_{134}x_{125}x_{267} - x_{124}x_{135}x_{267} + x_{123}x_{145}x_{267}
	\Big)\\
	- x_{127}y_{123456} + x_{126}y_{123457} - x_{125}y_{123467} + x_{124}y_{123567} - x_{123}y_{124567}.
	\end{split}
\end{equation}
The 3-term entry in position $(6,1)$ is
\[
	x_{134}x_{125} - x_{124}x_{135} + x_{123}x_{145}.
\]
The 11-term entry in position $(6,6)$ is
\begin{gather*}
	\frac{1}{2}\Big(
	x_{345}x_{126} - x_{245}x_{136} + x_{145}x_{236} - x_{135}x_{246} + x_{125}x_{346} - x_{234}x_{156}\\
	+ x_{134}x_{256} - x_{124}x_{356} + x_{123}x_{456}
	\Big) - y_{123456}.
\end{gather*}

\subsubsection{Vertices $(2,5)$, format $(1,5,7,3)$}
For this format, we draw the Dynkin diagram as
\[\begin{tikzcd}
\colorA{2} \ar[r,dash]& 4 \ar[r,dash] & 3 \ar[r,dash] & 1 \\
& \colorB{5} \ar[u,dash]\\
& 6 \ar[u,dash]\\
& 7 \ar[u,dash]
\end{tikzcd}\]
Let $F_1 = \CC^5, F_2 = \CC^7$, and $F_3 = \CC^3$. Fix the following subalgebras of $\mf{g}$:
\begin{itemize}
	\item $\mf{sl}(F_3)$ corresponding to the nodes $6,7$,
	\item $\mf{sl}(F_1)$ corresponding to the nodes $1,3,4,2$,
	\item $\mf{sl}(F_2)$ corresponding to the nodes $1,3,4,5,6,7$.
\end{itemize}
The three fundamental representations of interest are $V(\omega_7)$, $V(\omega_1)$, and $V(\omega_2)$. Their full decompositions can be found in \cite{LW19}.

The representation $V(\omega_7)$ has 6 graded components in the 5-grading and 4 in the 2-grading:
\begin{align*}
V(\omega_7) &= F_3 \oplus \bigwedge^2 F_1 \oplus \cdots \oplus \colorB{\boxed{S_{2,1,1}F_3^* \otimes S_{2^5}F_1}}\\
&= \colorA{\boxed{F_2}} \oplus \bigwedge^2 F_2^* \oplus \bigwedge^5 F_2^* \oplus S_{2,1^6}F_2^*
\end{align*}
The representation $V(\omega_1)$ is the adjoint. It has 7 graded components in the 5-grading and 5 in the 2-grading:
\begin{align*}
V(\omega_1) &= F_1^* \oplus F_3^* \otimes F_1 \oplus \cdots \oplus \colorB{\boxed{S_{2,2,2}F_3^* \otimes S_{3,2^4}F_1}}\\
&= \colorA{\boxed{F_2^*}} \oplus \bigwedge^4 F_2^* \oplus \cdots \oplus S_{2^6,1} F_2^*
\end{align*}
The representation $V(\omega_2)$ has 10 graded components in the 5-grading and 8 in the 2-grading:
\begin{align*}
V(\omega_2) &= F_1 \oplus F_3^* \otimes \bigwedge^3 F_1 \oplus \cdots \oplus \colorB{\boxed{S_{3,3,3}F_3^* \otimes S_{4^4,3} F_1}}\\
&= \colorA{\boxed{\mb{C}}} \oplus \bigwedge^3 F_2^* \oplus \cdots \oplus S_{3^7} F_2^*
\end{align*}
Following the construction of \S\ref{sec:general-construction}, we obtain resolutions $\mb{F}_2$ and $\mb{F}_5$ of the form
\[
0 \to \colorB{F_3^*} \otimes R \xto{\exp_{7}} \colorA{F_2} \otimes R \xto{\exp_1^*} \colorB{F_1^*}\otimes R \xto{\exp_2} \colorA{\CC}\otimes R.
\]
Both resolutions have the multigrading
\begin{align*}
0 \to
\bigoplus&\begin{matrix}
R(-(8, & \colorA{13}, & 16, & 24, & \colorB{19}, & 12, & 6))\\
R(-(8, & \colorA{13}, & 16, & 24, & \colorB{19}, & 13, & 6))\\
R(-(8, & \colorA{13}, & 16, & 24, & \colorB{19}, & 13, & 7))
\end{matrix}
\to\\[1em]\xrightarrow{\exp_7}
\bigoplus&\begin{matrix}
R(-(6, & \colorA{10}, & 12, & 18, & \colorB{14}, & 9, & 4))\\
R(-(6, & \colorA{10}, & 12, & 18, & \colorB{14}, & 9, & 5))\\
R(-(6, & \colorA{10}, & 12, & 18, & \colorB{14}, & 10, & 5))\\
R(-(6, & \colorA{10}, & 12, & 18, & \colorB{15}, & 10, & 5))\\
R(-(6, & \colorA{10}, & 12, & 19, & \colorB{15}, & 10, & 5))\\
R(-(6, & \colorA{10}, & 13, & 19, & \colorB{15}, & 10, & 5))\\
R(-(7, & \colorA{10}, & 13, & 19, & \colorB{15}, & 10, & 5))
\end{matrix}
\to\\[1em]\xrightarrow{\exp_1^*}
\bigoplus&\begin{matrix}
R(-(3, & \colorA{6}, & 7, & 11, & \colorB{9}, & 6, & 3))\\
R(-(4, & \colorA{6}, & 7, & 11, & \colorB{9}, & 6, & 3))\\
R(-(4, & \colorA{6}, & 8, & 11, & \colorB{9}, & 6, & 3))\\
R(-(4, & \colorA{6}, & 8, & 12, & \colorB{9}, & 6, & 3))\\
R(-(4, & \colorA{7}, & 8, & 12, & \colorB{9}, & 6, & 3))
\end{matrix}
\to\\\xrightarrow{\exp_2}
&R
\end{align*}
If we take $f_7,\ldots,f_1$ to be a standard basis of $F_2$ and define $R$ as in the previous subsection, then the differential $d_3$ of $\mb{F}_2$ is the transpose of the first three rows of $d_2$ for $(1,6,7,2)$.

\subsection{$E_8$}
Finally, we consider the Lie algebra $\mf{g}$ associated to the Dynkin diagram $E_8$. The vertex pair $(2,3)$ yields complexes of format $(1,7,8,2)$ and the pair $(2,5)$ yields complexes of format $(1,5,8,4)$. The construction is completely analogous to that for $E_7$, as there is once again no exceptional duality. Unfortunately, all of the differentials are too complicated to describe here explicitly. Only the simplest differential in each case has been implemented in Macaulay2, namely $d_2$ for $(1,7,8,2)$ and $d_3$ for $(1,5,8,4)$. These differentials are constructed from the adjoint representation $V(\omega_8)$, which is the smallest of the three extremal fundamental representations.

\subsubsection{Vertices $(2,3)$, format $(1,7,8,2)$}\label{sec:1782}
For this format, we draw the Dynkin diagram as
\[\begin{tikzcd}
\colorA{2} \ar[r,dash]& 4 \ar[r,dash] & 5 \ar[r,dash] & 6 \ar[r,dash] & 7 \ar[r,dash] & 8\\
& \colorB{3} \ar[u,dash]\\
& 1 \ar[u,dash]
\end{tikzcd}\]
Let $F_1 = \CC^7, F_2 = \CC^8$, and $F_3 = \CC^2$. Fix the following subalgebras of $\mf{g}$:
\begin{itemize}
	\item $\mf{sl}(F_3)$ corresponding to the node $1$,
	\item $\mf{sl}(F_1)$ corresponding to the nodes $8,7,6,5,4,2$,
	\item $\mf{sl}(F_2)$ corresponding to the nodes $8,7,6,5,4,3,1$.
\end{itemize}
The three fundamental representations of interest are $V(\omega_1)$, $V(\omega_8)$, and $V(\omega_2)$. Their full decompositions can be found in \cite{LW19}.

The representation $V(\omega_1)$ has 15 graded components in the 3-grading and 11 in the 2-grading:
\begin{align*}
V(\omega_1) &= F_3 \oplus \bigwedge^2 F_1 \oplus \cdots \oplus \colorB{\boxed{S_{7,6}F_3^* \otimes S_{4^7}F_1}}\\
&= \colorA{\boxed{F_2}} \oplus \bigwedge^4 F_2 \oplus \cdots \oplus S_{4^7,3} F_2
\end{align*}
The representation $V(\omega_8)$ is the adjoint. It has 9 graded components in the 3-grading and 7 in the 2-grading:
\begin{align*}
V(\omega_8) &= F_1^* \oplus F_3^* \otimes F_1 \oplus \cdots \oplus \colorB{\boxed{S_{4,4}F_3^* \otimes S_{3,2^6}F_1}}\\
&= \colorA{\boxed{F_2^*}} \oplus \bigwedge^2 F_2 \oplus \cdots \oplus S_{3,2^7}F_2
\end{align*}
The representation $V(\omega_2)$ has 21 graded components in the 3-grading and 17 in the 2-grading:
\begin{align*}
V(\omega_2) &= F_1 \oplus F_3^* \otimes \bigwedge^3 F_1 \oplus \cdots \oplus \colorB{\boxed{S_{10,10}F_3^* \otimes S_{6^6,5} F_1}}\\
&= \colorA{\boxed{\mb{C}}} \oplus \bigwedge^3 F_2 \oplus \cdots \oplus S_{6^8} F_2
\end{align*}
Following the construction of \S\ref{sec:general-construction}, we obtain resolutions $\mb{F}_2$ and $\mb{F}_3$ of the form
\[
0 \to \colorB{F_3^*} \otimes R \xto{\exp_{1}} \colorA{F_2} \otimes R \xto{\exp_8^*} \colorB{F_1^*}\otimes R \xto{\exp_2} \colorA{\CC}\otimes R.
\]
Both resolutions have the multigrading
\begin{align*}
0 \to
\bigoplus&\begin{matrix}
R(-(20, & \colorA{31}, & \colorB{41}, & 60, & 48, & 36, & 24, & 12))\\
R(-(21, & \colorA{31}, & \colorB{41}, & 60, & 48, & 36, & 24, & 12))
\end{matrix}
\to\\[1em]\xrightarrow{\exp_1}
\bigoplus&\begin{matrix}
R(-(13, & \colorA{21}, & \colorB{27}, & 40, & 32, & 24, & 16, & 8))\\
R(-(14, & \colorA{21}, & \colorB{27}, & 40, & 32, & 24, & 16, & 8))\\
R(-(14, & \colorA{21}, & \colorB{28}, & 40, & 32, & 24, & 16, & 8))\\
R(-(14, & \colorA{21}, & \colorB{28}, & 41, & 32, & 24, & 16, & 8))\\
R(-(14, & \colorA{21}, & \colorB{28}, & 41, & 33, & 24, & 16, & 8))\\
R(-(14, & \colorA{21}, & \colorB{28}, & 41, & 33, & 25, & 16, & 8))\\
R(-(14, & \colorA{21}, & \colorB{28}, & 41, & 33, & 25, & 17, & 8))\\
R(-(14, & \colorA{21}, & \colorB{28}, & 41, & 33, & 25, & 17, & 9))
\end{matrix}
\to\\[1em]\xrightarrow{\exp_8^*}
\bigoplus&\begin{matrix}
R(-(10, & \colorA{15}, & \colorB{20}, & 29, & 23, & 17, & 11, & 5))\\
R(-(10, & \colorA{15}, & \colorB{20}, & 29, & 23, & 17, & 11, & 6))\\
R(-(10, & \colorA{15}, & \colorB{20}, & 29, & 23, & 17, & 12, & 6))\\
R(-(10, & \colorA{15}, & \colorB{20}, & 29, & 23, & 18, & 12, & 6))\\
R(-(10, & \colorA{15}, & \colorB{20}, & 29, & 24, & 18, & 12, & 6))\\
R(-(10, & \colorA{15}, & \colorB{20}, & 30, & 24, & 18, & 12, & 6))\\
R(-(10, & \colorA{16}, & \colorB{20}, & 30, & 24, & 18, & 12, & 6))
\end{matrix}
\to\\\xrightarrow{\exp_2}
&R.
\end{align*}

\subsubsection{Vertices $(2,5)$, format $(1,5,8,4)$}
For this format, we draw the Dynkin diagram as
\[\begin{tikzcd}
\colorA{2} \ar[r,dash]& 4 \ar[r,dash] & 3 \ar[r,dash] & 1 \\
& \colorB{5} \ar[u,dash]\\
& 6 \ar[u,dash]\\
& 7 \ar[u,dash]\\
& 8 \ar[u,dash]
\end{tikzcd}\]
Let $F_1 = \CC^5, F_2 = \CC^8$, and $F_3 = \CC^4$. Fix the following subalgebras of $\mf{g}$:
\begin{itemize}
	\item $\mf{sl}(F_3)$ corresponding to the nodes $6,7,8$,
	\item $\mf{sl}(F_1)$ corresponding to the nodes $1,3,4,2$,
	\item $\mf{sl}(F_2)$ corresponding to the nodes $1,3,4,5,6,7,8$.
\end{itemize}
The three fundamental representations of interest are $V(\omega_8)$, $V(\omega_1)$, and $V(\omega_2)$. Their full decompositions can be found in \cite{LW19}.

The representation $V(\omega_8)$ is the adjoint. It has 11 graded components in the 5-grading and 7 in the 2-grading:
\begin{align*}
V(\omega_8) &= F_3 \oplus \bigwedge^2 F_1 \oplus \cdots \oplus \colorB{\boxed{S_{3,2^3}F_3^* \otimes S_{4^5}F_1}}\\
&= \colorA{\boxed{F_2}} \oplus \bigwedge^2 F_2^* \oplus \cdots \oplus S_{3,2^7}F_2^*
\end{align*}
The representation $V(\omega_1)$ has 17 graded components in the 5-grading and 11 in the 2-grading:
\begin{align*}
V(\omega_1) &= F_1^* \oplus F_3^* \otimes F_1 \oplus \cdots \oplus \colorB{\boxed{S_{4^4}F_3^* \otimes S_{7,6^4}F_1}}\\
&= \colorA{\boxed{F_2^*}} \oplus \bigwedge^4 F_2^* \oplus \cdots \oplus S_{4^7,3} F_2^*
\end{align*}
The representation $V(\omega_2)$ has 25 graded components in the 5-grading and 17 in the 2-grading:
\begin{align*}
V(\omega_2) &= F_1 \oplus F_3^* \otimes \bigwedge^3 F_1 \oplus \cdots \oplus \colorB{\boxed{S_{6^4}F_3^* \otimes S_{10^4,9} F_1}}\\
&= \colorA{\boxed{\mb{C}}} \oplus \bigwedge^3 F_2^* \oplus \cdots \oplus S_{6^8} F_2^*
\end{align*}
Following the construction of \S\ref{sec:general-construction}, we obtain resolutions $\mb{F}_2$ and $\mb{F}_5$ of the form
\[
0 \to \colorB{F_3^*} \otimes R \xto{\exp_{8}} \colorA{F_2} \otimes R \xto{\exp_1^*} \colorB{F_1^*}\otimes R \xto{\exp_2} \colorA{\CC}\otimes R.
\]
Both resolutions have the multigrading
\begin{align*}
0 \to
\bigoplus&\begin{matrix}
R(-(20, & \colorA{31}, & 40, & 60, & \colorB{49}, & 36, & 24, & 12))\\
R(-(20, & \colorA{31}, & 40, & 60, & \colorB{49}, & 37, & 24, & 12))\\
R(-(20, & \colorA{31}, & 40, & 60, & \colorB{49}, & 37, & 25, & 12))\\
R(-(20, & \colorA{31}, & 40, & 60, & \colorB{49}, & 37, & 25, & 13))
\end{matrix}
\to\\[1em]\xrightarrow{\exp_8}
\bigoplus&\begin{matrix}
R(-(16, & \colorA{25}, & 32, & 48, & \colorB{39}, & 29, & 19, & 9))\\
R(-(16, & \colorA{25}, & 32, & 48, & \colorB{39}, & 29, & 19, & 10))\\
R(-(16, & \colorA{25}, & 32, & 48, & \colorB{39}, & 29, & 20, & 10))\\
R(-(16, & \colorA{25}, & 32, & 48, & \colorB{39}, & 30, & 20, & 10))\\
R(-(16, & \colorA{25}, & 32, & 48, & \colorB{40}, & 30, & 20, & 10))\\
R(-(16, & \colorA{25}, & 32, & 49, & \colorB{40}, & 30, & 20, & 10))\\
R(-(16, & \colorA{25}, & 33, & 49, & \colorB{40}, & 30, & 20, & 10))\\
R(-(17, & \colorA{25}, & 33, & 49, & \colorB{40}, & 30, & 20, & 10))
\end{matrix}
\to\\[1em]\xrightarrow{\exp_1^*}
\bigoplus&\begin{matrix}
R(-(9, & \colorA{15}, & 19, & 29, & \colorB{24}, & 18, & 12, & 6))\\
R(-(10, & \colorA{15}, & 19, & 29, & \colorB{24}, & 18, & 12, & 6))\\
R(-(10, & \colorA{15}, & 20, & 29, & \colorB{24}, & 18, & 12, & 6))\\
R(-(10, & \colorA{15}, & 20, & 30, & \colorB{24}, & 18, & 12, & 6))\\
R(-(10, & \colorA{16}, & 20, & 30, & \colorB{24}, & 18, & 12, & 6))
\end{matrix}
\to\\\xrightarrow{\exp_2}
&R.
\end{align*}


\section{Gorenstein ideals of codimension four}\label{sec:Gor4}
We now explicitly describe the second family of examples, which yield resolutions of Gorenstein ideals of codimension four on $n = 4,\ldots,8$ generators. We conjecture that these resolutions are generic (for their respective number of generators).

Here we consider Dynkin diagrams of the form
\[\begin{tikzcd}[column sep = small, row sep = small]
	\colorC{x_1} \ar[r,dash] & u \ar[r,dash]\ar[d,dash] & y_1 \ar[r,dash] & \cdots \ar[r,dash] & y_{n-4} \\
& z_1 \ar[d,dash]\\
& \colorB{z_2}	
\end{tikzcd}\]
As $n$ ranges from 4 to 8, we obtain the diagrams $E_4 = A_4, E_5 = D_5, E_6, E_7, E_8$ respectively. For each case, we obtain two equivalent complexes $\mb{F}_{x_1}$ and $\mb{F}_{z_2}$, as per Lemma~\ref{lem:dual-format-equiv}. We will just describe the complex $\mb{F}_{x_1}$.

\subsection{$E_4 = A_4$}
For $n=4$, we have the diagram $A_4$, displayed below with the standard Bourbaki numbering, and its corresponding Lie algebra $\mf{g} = \mf{sl}(5)$.
\[\begin{tikzcd}[column sep = small, row sep = small]
\colorC{4} \ar[r,dash] & 3 \ar[d,dash]\\
& 2 \ar[d,dash]\\
& \colorB{1}
\end{tikzcd}\]
The representations of interest are $V(\omega_1)$ and $V(\omega_3)$. Per Remark~\ref{rem:dual-node}, since the nodes 4 and 1 are dual on the Dynkin diagram, we will use the bottom graded components in the 1-grading rather than the 4-grading. To faciliate this, let $F = \mb{C}^4$, and let write $\mf{sl}(F)$ for the subalgebra of $\mf{g}$ corresponding to the nodes $4,3,2$. The decompositions are

\begin{align*}
V(\omega_{1}) &= \colorC{\boxed{F^*}} \oplus \colorB{\boxed{\mb{C}}}\\
V(\omega_3) &= \colorC{\boxed{F}} \oplus \colorB{\boxed{\bigwedge^2 F}}
\end{align*}

Let $\mf{n}$ be the negative part of $\mf{g}$ in the 1-grading. Explicitly, $\mf{n} = F^*$. As before, let $R = \operatorname{Sym}_{\mb{C}}(\mf{n}^*)$, i.e.
\[
	R = \mb{C}[x_1,x_2,x_3,x_4]
\]
where $x_1,\ldots,x_4$ is a basis of $F$. Our construction yields a resolution
\[
0 \to \colorB{\mb{C}} \otimes R \xto{\exp_1} \colorC{F^*} \otimes R \xto{\exp_3^*} \colorB{\bigwedge^2 F}\otimes R \xto{\exp_3} \colorC{F}\otimes R \xto{\exp_1^*} \colorB{\mb{C}}\otimes R.
\]
Note that $\bigwedge^2 F \cong \bigwedge^2 F^*$. The actions of $\mf{n}$ on $V(\omega_1)$ and $V(\omega_3)$ are the evident ones, and from there it is straightforward to show that this resolution is none other than the Koszul complex on the variables $x_1,\ldots,x_4$. So the Genericity Conjecture holds in this very simple case, as this is clearly the generic resolution of a Gorenstein ideal of codimension four on $n=4$ generators (i.e. a complete intersection).

\subsection{$E_5 = D_5$}
For $n=5$, we have the diagram $D_5$ and its corresponding Lie algebra $\mf{g} = \mf{so}(10)$.
\[\begin{tikzcd}[column sep = small, row sep = small]
\colorC{5} \ar[r,dash] & 3 \ar[d,dash]\ar[r,dash] & 4\\
& 2 \ar[d,dash]\\
& \colorB{1}
\end{tikzcd}\]
Since the nodes 5 and 4 are dual on $D_5$, we will use the bottom graded components in the 4-grading rather than the 5-grading. We use the top graded components in the 1-grading as usual. Let $\mf{sl}(F)$ be the subalgebra of $\mf{g}$ corresponding to the nodes $1,2,3,5$ (where $F = \mb{C}^5$) and let $\mf{sl}(H)$ be the subalgebra of $\mf{g}$ corresponding to the nodes $2,3,5$ (where $H = \mb{C}^4$). The decompositions of $V(\omega_1)$ in the 1-grading and 4-grading are
\begin{align*}
V(\omega_{1}) &= \mb{C} \oplus (H\oplus H^*) \oplus \colorB{\boxed{\mb{C}}}\\
&= \colorC{\boxed{F^*}} \oplus F
\end{align*}
Note that in the 1-grading, the components are representations of $\mf{so}(H\oplus H^*)$. The other representation of interest is $V(\omega_4)$:
\begin{align*}
V(\omega_4) &= (H \oplus \bigwedge^3 H) \oplus \colorB{\boxed{(\mb{C} \oplus \bigwedge^2 H \oplus \bigwedge^4 H)}}\\
&= \colorC{\boxed{F}} \oplus \bigwedge^3 F \oplus \bigwedge^5 F	
\end{align*}
Let $\mf{n}$ be the negative part of $\mf{g}$ in the 4-grading, i.e. $\mf{n} = \bigwedge^2 F^*$, and let $R$ be its coordinate ring as usual.

Our construction yields a resolution
\[
	0 \to \colorB{\mb{C}} \otimes R \xto{\exp_1} \colorC{F^*} \otimes R \xto{\exp_4^*} \colorB{(\mb{C} \oplus \bigwedge^2 H \oplus \bigwedge^4 H)}\otimes R \xto{\exp_4} \colorC{F}\otimes R \xto{\exp_1^*} \colorB{\mb{C}}\otimes R.
\]
However, this resolution is not minimal. In the representation $V(\omega_4)$, there is a 1-dimensional overlap between the two boxed components since $F = H \oplus \mb{C}$, causing units to appear in $\exp_4$. After simplification, we obtain the isomorphic minimal complex
\[
	0 \to \mb{C} \otimes R \xto{\exp_1} H^* \otimes R \xto{\exp_4^*} \bigwedge^2 H \otimes R \xto{\exp_4} H \otimes R \xto{\exp_1^*} \mb{C} \otimes R
\]
but this is just a Koszul complex on $H \subset \bigwedge^2 F$ (recall that $R = \operatorname{Sym}_{\mb{C}}(\bigwedge^2 F)$).

Once again, the Genericity Conjecture holds: it is known that a Gorenstein ideal of codimension $c$ cannot be minimally generated by $c+1$ elements \cite{Kunz79}, thus in codimension four, any $(1,5,8,5,1)$ complex resolving a Gorenstein ideal must be Koszul plus split.

\subsection{$E_6$}
The previous two examples merely resulted in Koszul complexes, but for $n=6$ the construction yields a more interesting output. Now we deal with $E_6$:
\[\begin{tikzcd}[column sep = small, row sep = small]
\colorC{2} \ar[r,dash] & 4 \ar[d,dash]\ar[r,dash] & 5 \ar[r,dash] & 6\\
& 3 \ar[d,dash]\\
& \colorB{1}
\end{tikzcd}\]
Write $\mf{g}$ for the corresponding Lie algebra. We fix some subalgebras of $\mf{g}$ as follows:
\begin{itemize}
	\item $\mf{sl}(H)$ corresponding to the nodes $6,5,4,3$, where $H = \mb{C}^5$,
	\item $\mf{sl}(F)$ corresponding to the nodes $6,5,4,3,1$, where $F = \mb{C}^6$.
\end{itemize}
Note that the subalgebra corresponding to the nodes $6,5,4,3,2$ is $\mf{so}(H\oplus H^*)$.

With this notation, the representations $V(\omega_1)$ and $V(\omega_6)$ decompose in the 1-grading and 2-grading as
\begin{align*}
V(\omega_1) &= (H \oplus H^*) \oplus {V(\omega_3,\mf{so}(H \oplus H^*))} \oplus {\colorB{\boxed{\CC}}}\\
&= {\colorA{\boxed{F^*}}} \oplus {\bigwedge^2 F} \oplus {\bigwedge^5 F}\\
V(\omega_6) &= {\CC} \oplus {V(\omega_2,\mf{so}(H \oplus H^*))} \oplus {\colorB{\boxed{(H \oplus H^*)}}}\\
&= {\colorA{\boxed{F}}} \oplus {\bigwedge^4 F} \oplus {S_{2,1^5} F}
\end{align*}
Let $\mf{n}$ be the negative part of $\mf{g}$ in the 2-grading:
\[
	\mf{n} = \underset{-2}{\bigwedge^6 F^*} \oplus \underset{-1}{\bigwedge^3 F^*}.
\]
Let $e_1,\ldots,e_6$ be a standard basis of $F$. Let $\epsilon_1,\ldots,\epsilon_6$ be the dual basis. As usual, we set $R = \operatorname{Sym}_{\mb{C}}(\mf{n}^*)$ to be the coordinate ring of $\mf{n}$, i.e.
\[
	R = \mb{C}[\{x_{ijk}\}_{1 \leq i < j < k \leq 6}, y]
\]
where $x_{ijk}$ corresponds to $e_{ijk}$ (shorthand for $e_i \wedge e_j \wedge e_k$) and $y$ corresponds to $e_{1\ldots 6}$.

We produce a free resolution of the form
\[
0 \to \colorB{\CC} \otimes R \xto{\exp_1} \colorA{F^*} \otimes R \xto{\exp_6^*} \colorB{(H \oplus H^*)}\otimes R \xto{\exp_6} \colorA{F}\otimes R \xto{\exp_1^*} \colorB{\CC} \otimes R.
\]
Since it is self-dual, it will suffice to describe $d_4 = \exp_1$ and $d_3 = \exp_6^*$.

A generic element of $\mf{n}$ has the form
\[
	\sum x_{ijk} \epsilon_{ijk} + y \epsilon_{1\ldots 6}
\]
where $\epsilon_I$ is shorthand for $\bigwedge_{i\in I} \epsilon_i$. A highest weight vector in $V(\omega_1)$ is $e_{1\ldots 5}$, so to determine $d_4$ we need only exponentiate the action of the above element on $e_{1 \ldots 5}$, i.e.
\[
	\left( \frac{1}{2}\left(\sum x_{ijk} \epsilon_{ijk} \right)^2 + y\epsilon_{1\ldots 6}\right) \cdot e_{1 \ldots 5} \in F^* \otimes R.
\]
Computation shows that the coefficients of $\epsilon_1,\ldots,\epsilon_5$ in the expression above are the $4\times 4$ Pfaffians of the $5\times 5$ generic skew matrix
\[
	\begin{bmatrix}
	0 & x_{345} & -x_{245} & x_{235} & -x_{234}\\
	-x_{345} & 0 & x_{145} & -x_{135} & x_{134}\\
	x_{245} & -x_{145} & 0 & x_{125} & -x_{124}\\
	-x_{235} & x_{135} & -x_{125} & 0 & x_{123}\\
	x_{234} & -x_{134} & x_{124} & -x_{123} & 0
	\end{bmatrix}.
\]
The coefficient of $\epsilon_6$ has the form
\[
	y + \frac{1}{4}\sum_{I \subset [6]} \pm x_I x_{[6]\setminus I}.
\]
From this computation of $d_4$ (and thus of $d_1$), the Genericity Conjecture predicts that Gorenstein ideals of codimension four on $n=6$ generators are hypersurface sections of Gorenstein ideals of codimension three on 5 generators. This is a well-known conjecture that has been established under some mild hypotheses; see \cite{HM85} and \cite{VV86}.

The ring $R$ and this resolution are $\mb{Z}^6$-graded; the multigrading is displayed in Figure~\ref{fig:length4-E6}.

\subsection{$E_7$}
We draw the Dynkin diagram as:
\[\begin{tikzcd}[column sep = small, row sep = small]
\colorC{2} \ar[r,dash] & 4 \ar[d,dash]\ar[r,dash] & 5 \ar[r,dash] & 6 \ar[r,dash] & 7\\
& 3 \ar[d,dash]\\
& \colorB{1}
\end{tikzcd}\]
Write $\mf{g}$ for the corresponding Lie algebra. We fix some subalgebras of $\mf{g}$ as follows:
\begin{itemize}
	\item $\mf{sl}(H)$ corresponding to the nodes $7,6,5,4,3$, where $H = \mb{C}^6$,
	\item $\mf{sl}(F)$ corresponding to the nodes $1,3,4,5,6,7$, where $F = \mb{C}^7$.
\end{itemize}
Note that the subalgebra corresponding to the nodes $7,6,5,4,3,2$ is $\mf{so}(H\oplus H^*)$.

The representation $V(\omega_1)$ is the adjoint. It has 5 graded components in the 1-grading and 5 in the 2-grading:
\begin{align*}
V(\omega_1) &= \CC \oplus \bigwedge^\mathrm{odd}H \oplus \cdots \oplus {\colorB{\boxed{\CC}}}\\
&= {\colorA{\boxed{F^*}}} \oplus {\bigwedge^4 F^*} \oplus \cdots \oplus {S_{2^6,1} F^*}
\end{align*}
The representation $V(\omega_7)$ has 3 graded components in the 1-grading and 4 in the 2-grading:
\begin{align*}
	V(\omega_7) &= (H \oplus H^*) \oplus \bigwedge^\mathrm{even}H \oplus {\colorB{\boxed{(H \oplus H^*)}}}\\
	&= {\colorA{\boxed{F}}} \oplus {\bigwedge^2 F^*} \oplus {\bigwedge^5 F^*} \oplus {S_{2,1^6} F^*}
\end{align*}
Take $R$ to be the same as in \S\ref{sec:length3} for the format $(1,5,7,3)$. We will describe the differential $d_2 = \exp_7$ in the resolution
\[
0 \to \colorB{\CC} \otimes R \xto{\exp_1} \colorA{F^*} \otimes R \xto{\exp_7^*} \colorB{(H \oplus H^*)}\otimes R \xto{\exp_7} \colorA{F}\otimes R \xto{\exp_1^*} \colorB{\CC} \otimes R.
\]
Many of the entries are shared with the differential $d_2$ for the $(1,6,7,2)$ resolution, since it is constructed using the same representation. The table of entries is
\[\setcounter{MaxMatrixCols}{20}
	\begin{bmatrix}
	35 & 35 & 35 & 35 & 35 & 35 & 11 & 11 & 11 & 11 & 11 & 11\\
	35 & 35 & 35 & 35 & 35 & 15 & 3 & 3 & 3 & 3 & 3 & 11\\
	35 & 35 & 35 & 35 & 15 & 35 & 3 & 3 & 3 & 3 & 11 & 3\\
	35 & 35 & 35 & 15 & 35 & 35 & 3 & 3 & 3 & 11 & 3 & 3\\
	35 & 35 & 15 & 35 & 35 & 35 & 3 & 3 & 11 & 3 & 3 & 3\\
	35 & 15 & 35 & 35 & 35 & 35 & 3 & 11 & 3 & 3 & 3 & 3\\
	15 & 35 & 35 & 35 & 35 & 35 & 11 & 3 & 3 & 3 & 3 & 3
	\end{bmatrix}.
\]
The 15-term entry in position $(7,1)$ and the 35-term entry in position $(6,1)$ are the same as in \eqref{eq:15-term} and \eqref{eq:35-term} respectively. The 11-term entry in position $(7,7)$ is
\begin{gather*}
	\frac{1}{2}\Big(
	x_{345}x_{126} - x_{245}x_{136} - x_{145}x_{236} + x_{235}x_{146} + x_{135}x_{246}\\
	- x_{125}x_{346} - x_{234}x_{156} - x_{134}x_{256} + x_{124}x_{356} - x_{123}x_{456}
	\Big) - y_{123456}
\end{gather*}
and the 3-term entry in position $(7,8)$ is
\[
	-x_{145}x_{136} + x_{135}x_{146} - x_{134}x_{156}.
\]

The multigrading on the resolution is displayed in Figure~\ref{fig:length4-E7}. 

\subsection{$E_8$}
We draw the Dynkin diagram as:
\[\begin{tikzcd}[column sep = small, row sep = small]
\colorC{2} \ar[r,dash] & 4 \ar[d,dash]\ar[r,dash] & 5 \ar[r,dash] & 6 \ar[r,dash] & 7 \ar[r,dash] & 8\\
& 3 \ar[d,dash]\\
& \colorB{1}
\end{tikzcd}\]
Write $\mf{g}$ for the corresponding Lie algebra. We fix some subalgebras of $\mf{g}$ as follows:
\begin{itemize}
	\item $\mf{sl}(H)$ corresponding to the nodes $8,7,6,5,4,3$, where $H = \mb{C}^7$,
	\item $\mf{sl}(F)$ corresponding to the nodes $1,3,4,5,6,7,8$, where $F = \mb{C}^8$.
\end{itemize}
Note that the subalgebra corresponding to the nodes $8,7,6,5,4,3,2$ is $\mf{so}(H\oplus H^*)$.

The representation $V(\omega_1)$ has 9 graded components in the 1-grading and 11 in the 2-grading:
\begin{align*}
V(\omega_1) &= \CC \oplus \bigwedge^\mathrm{odd}H \oplus \cdots \oplus {\colorB{\boxed{\CC}}}\\
&= {\colorA{\boxed{F^*}}} \oplus {\bigwedge^4 F^*} \oplus \cdots \oplus {S_{4^7,3} F^*}
\end{align*}
The representation $V(\omega_8)$ is the adjoint. It has 5 graded components in the 1-grading and 7 in the 2-grading:
\begin{align*}
V(\omega_8) &= (H \oplus H^*) \oplus \bigwedge^\mathrm{even}H \oplus (\CC\oplus \bigwedge^2(H\oplus H^*))\oplus \bigwedge^\mathrm{odd}H \oplus {\colorB{\boxed{(H \oplus H^*)}}}\\
&= {\colorA{\boxed{F}}} \oplus {\bigwedge^2 F^*} \oplus \cdots \oplus {S_{3,2^7} F^*}
\end{align*}
The construction yields a resolution
\[
0 \to \colorB{\CC} \otimes R \xto{\exp_1} \colorA{F^*} \otimes R \xto{\exp_8^*} \colorB{(H \oplus H^*)}\otimes R \xto{\exp_8} \colorA{F}\otimes R \xto{\exp_1^*} \colorB{\CC} \otimes R
\]
with multigrading displayed in Figure~\ref{fig:length4-E8}. We will not describe any of the differentials explicitly, though we remark that $\exp_8$ has been implemented in Macaulay2.

\begin{figure}[!h]
	\begin{align*}
	0 \to
	&\begin{matrix}
	R(-(\colorB{6}, & \colorA{7}, & 9, & 12, & 8, & 4))
	\end{matrix}
	\to\\[1em]\xrightarrow{\exp_1}
	\bigoplus&\begin{matrix}
	R(-(\colorB{4}, & \colorA{5}, & 6, & 8, & 5, & 2))\\
	R(-(\colorB{4}, & \colorA{5}, & 6, & 8, & 5, & 3))\\
	R(-(\colorB{4}, & \colorA{5}, & 6, & 8, & 6, & 3))\\
	R(-(\colorB{4}, & \colorA{5}, & 6, & 9, & 6, & 3))\\
	R(-(\colorB{4}, & \colorA{5}, & 7, & 9, & 6, & 3))\\
	R(-(\colorB{5}, & \colorA{5}, & 7, & 9, & 6, & 3))
	\end{matrix}
	\to\\[1em]\xrightarrow{\exp_6^*}
	\bigoplus&\begin{matrix}
	R(-(\colorB{3}, & \colorA{3}, & 4, & 5, & 3, & 1))\\
	R(-(\colorB{3}, & \colorA{3}, & 4, & 5, & 3, & 2))\\
	R(-(\colorB{3}, & \colorA{3}, & 4, & 5, & 4, & 2))\\
	R(-(\colorB{3}, & \colorA{3}, & 4, & 6, & 4, & 2))\\
	R(-(\colorB{3}, & \colorA{3}, & 5, & 6, & 4, & 2))\\
	R(-(\colorB{3}, & \colorA{4}, & 4, & 6, & 4, & 2))\\
	R(-(\colorB{3}, & \colorA{4}, & 5, & 6, & 4, & 2))\\
	R(-(\colorB{3}, & \colorA{4}, & 5, & 7, & 4, & 2))\\
	R(-(\colorB{3}, & \colorA{4}, & 5, & 7, & 5, & 2))\\
	R(-(\colorB{3}, & \colorA{4}, & 5, & 7, & 5, & 3))
	\end{matrix}
	\to\\[1em]\xrightarrow{\exp_6}
	\bigoplus&\begin{matrix}
	R(-(\colorB{1}, & \colorA{2}, & 2, & 3, & 2, & 1))\\
	R(-(\colorB{2}, & \colorA{2}, & 2, & 3, & 2, & 1))\\
	R(-(\colorB{2}, & \colorA{2}, & 3, & 3, & 2, & 1))\\
	R(-(\colorB{2}, & \colorA{2}, & 3, & 4, & 2, & 1))\\
	R(-(\colorB{2}, & \colorA{2}, & 3, & 4, & 3, & 1))\\
	R(-(\colorB{2}, & \colorA{2}, & 3, & 4, & 3, & 2))
	\end{matrix}
	\to\\\xrightarrow{\exp_1^*}
	&R.
	\end{align*}
	\caption{Multigrading on resolution constructed from $E_6$.}\label{fig:length4-E6}
\end{figure}
\begin{figure}[!h]
	\begin{align*}
	0 \to
	&\begin{matrix}
	R(-(\colorB{10}, & \colorA{13}, & 17, & 24, & 18, & 12, & 6))
	\end{matrix}
	\to\\[1em]\xrightarrow{\exp_1}
	\bigoplus&\begin{matrix}
	R(-(\colorB{7}, & \colorA{9}, & 12, & 17, & 13, & 9, & 5))\\
	R(-(\colorB{7}, & \colorA{9}, & 12, & 17, & 13, & 9, & 4))\\
	R(-(\colorB{7}, & \colorA{9}, & 12, & 17, & 13, & 8, & 4))\\
	R(-(\colorB{7}, & \colorA{9}, & 12, & 17, & 12, & 8, & 4))\\
	R(-(\colorB{7}, & \colorA{9}, & 12, & 16, & 12, & 8, & 4))\\
	R(-(\colorB{7}, & \colorA{9}, & 11, & 16, & 12, & 8, & 4))\\
	R(-(\colorB{6}, & \colorA{9}, & 11, & 16, & 12, & 8, & 4))\\
	\end{matrix}
	\to\\[1em]\xrightarrow{\exp_7^*}
	\bigoplus&\begin{matrix}
	R(-(\colorB{5}, & \colorA{6}, & 8, & 11, & 8, & 5, & 2))\\
	R(-(\colorB{5}, & \colorA{6}, & 8, & 11, & 8, & 5, & 3))\\
	R(-(\colorB{5}, & \colorA{6}, & 8, & 11, & 8, & 6, & 3))\\
	R(-(\colorB{5}, & \colorA{6}, & 8, & 11, & 9, & 6, & 3))\\
	R(-(\colorB{5}, & \colorA{6}, & 8, & 12, & 9, & 6, & 3))\\
	R(-(\colorB{5}, & \colorA{6}, & 9, & 12, & 9, & 6, & 3))\\
	R(-(\colorB{5}, & \colorA{7}, & 9, & 13, & 10, & 7, & 4))\\
	R(-(\colorB{5}, & \colorA{7}, & 9, & 13, & 10, & 7, & 3))\\
	R(-(\colorB{5}, & \colorA{7}, & 9, & 13, & 10, & 6, & 3))\\
	R(-(\colorB{5}, & \colorA{7}, & 9, & 13, & 9, & 6, & 3))\\
	R(-(\colorB{5}, & \colorA{7}, & 9, & 12, & 9, & 6, & 3))\\
	R(-(\colorB{5}, & \colorA{7}, & 8, & 12, & 9, & 6, & 3))
	\end{matrix}
	\to\\[1em]\xrightarrow{\exp_7}
	\bigoplus&\begin{matrix}
	R(-(\colorB{3}, & \colorA{4}, & 5, & 7, & 5, & 3, & 1))\\
	R(-(\colorB{3}, & \colorA{4}, & 5, & 7, & 5, & 3, & 2))\\
	R(-(\colorB{3}, & \colorA{4}, & 5, & 7, & 5, & 4, & 2))\\
	R(-(\colorB{3}, & \colorA{4}, & 5, & 7, & 6, & 4, & 2))\\
	R(-(\colorB{3}, & \colorA{4}, & 5, & 8, & 6, & 4, & 2))\\
	R(-(\colorB{3}, & \colorA{4}, & 6, & 8, & 6, & 4, & 2))\\
	R(-(\colorB{4}, & \colorA{4}, & 6, & 8, & 6, & 4, & 2))
	\end{matrix}
	\to\\\xrightarrow{\exp_1^*}
	&R.
	\end{align*}
	\caption{Multigrading on resolution constructed from $E_7$.}\label{fig:length4-E7}
\end{figure}
\begin{figure}[!h]
\begin{align*}
0 \to
&\begin{matrix}
R(-(\colorB{22}, & \colorA{31}, & 41, & 60, & 48, & 36, & 24, & 12))
\end{matrix}
\to\\[1em]\xrightarrow{\exp_1}
\bigoplus&\begin{matrix}
R(-(\colorB{15}, & \colorA{21}, & 28, & 41, & 33, & 25, & 17, & 9))\\
R(-(\colorB{15}, & \colorA{21}, & 28, & 41, & 33, & 25, & 17, & 8))\\
R(-(\colorB{15}, & \colorA{21}, & 28, & 41, & 33, & 25, & 16, & 8))\\
R(-(\colorB{15}, & \colorA{21}, & 28, & 41, & 33, & 24, & 16, & 8))\\
R(-(\colorB{15}, & \colorA{21}, & 28, & 41, & 32, & 24, & 16, & 8))\\
R(-(\colorB{15}, & \colorA{21}, & 28, & 40, & 32, & 24, & 16, & 8))\\
R(-(\colorB{15}, & \colorA{21}, & 27, & 40, & 32, & 24, & 16, & 8))\\
R(-(\colorB{14}, & \colorA{21}, & 27, & 40, & 32, & 24, & 16, & 8))
\end{matrix}
\to\\[1em]\xrightarrow{\exp_8^*}
\bigoplus&\begin{matrix}
R(-(\colorB{11}, & \colorA{15}, & 20, & 29, & 23, & 17, & 11, & 5))\\
R(-(\colorB{11}, & \colorA{15}, & 20, & 29, & 23, & 17, & 11, & 6))\\
R(-(\colorB{11}, & \colorA{15}, & 20, & 29, & 23, & 17, & 12, & 6))\\
R(-(\colorB{11}, & \colorA{15}, & 20, & 29, & 23, & 18, & 12, & 6))\\
R(-(\colorB{11}, & \colorA{15}, & 20, & 29, & 24, & 18, & 12, & 6))\\
R(-(\colorB{11}, & \colorA{15}, & 20, & 30, & 24, & 18, & 12, & 6))\\
R(-(\colorB{11}, & \colorA{15}, & 21, & 30, & 24, & 18, & 12, & 6))\\
R(-(\colorB{11}, & \colorA{16}, & 21, & 31, & 25, & 19, & 13, & 7))\\
R(-(\colorB{11}, & \colorA{16}, & 21, & 31, & 25, & 19, & 13, & 6))\\
R(-(\colorB{11}, & \colorA{16}, & 21, & 31, & 25, & 19, & 12, & 6))\\
R(-(\colorB{11}, & \colorA{16}, & 21, & 31, & 25, & 18, & 12, & 6))\\
R(-(\colorB{11}, & \colorA{16}, & 21, & 31, & 24, & 18, & 12, & 6))\\
R(-(\colorB{11}, & \colorA{16}, & 21, & 30, & 24, & 18, & 12, & 6))\\
R(-(\colorB{11}, & \colorA{16}, & 20, & 30, & 24, & 18, & 12, & 6))
\end{matrix}
\to\\[1em]\xrightarrow{\exp_8}
\bigoplus&\begin{matrix}
R(-(\colorB{7}, & \colorA{10}, & 13, & 19, & 15, & 11, & 7, & 3))\\
R(-(\colorB{7}, & \colorA{10}, & 13, & 19, & 15, & 11, & 7, & 4))\\
R(-(\colorB{7}, & \colorA{10}, & 13, & 19, & 15, & 11, & 8, & 4))\\
R(-(\colorB{7}, & \colorA{10}, & 13, & 19, & 15, & 12, & 8, & 4))\\
R(-(\colorB{7}, & \colorA{10}, & 13, & 19, & 16, & 12, & 8, & 4))\\
R(-(\colorB{7}, & \colorA{10}, & 13, & 20, & 16, & 12, & 8, & 4))\\
R(-(\colorB{7}, & \colorA{10}, & 14, & 20, & 16, & 12, & 8, & 4))\\
R(-(\colorB{8}, & \colorA{10}, & 14, & 20, & 16, & 12, & 8, & 4))
\end{matrix}
\to\\\xrightarrow{\exp_1^*}
&R.
\end{align*}
\caption{Multigrading on resolution constructed from $E_8$.}\label{fig:length4-E8}
\end{figure}

\clearpage
\section{Proof of acyclicity}\label{sec:proofres}
We now relate the complexes considered in this paper to Schubert varieties, and prove their acyclicity in the process.
\begin{thm}\label{thm:acyclic}
	The two families of complexes defined in \S\ref{sec:general-construction} are acyclic.
\end{thm}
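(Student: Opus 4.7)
The plan is to realize each complex $\mb{F}_t$ as the minimal free resolution of the coordinate ring of an opposite Schubert variety $Y_t$ intersected with the big open cell $N_t \subset G/P_t$, where $P_t$ is the maximal parabolic associated to the node $t$. Via $g \mapsto gP_t$, the unipotent group $N_t = \exp(\mf{n}_t)$ embeds as a Zariski dense open subvariety of $G/P_t$, and its coordinate ring is precisely $R_t = \operatorname{Sym}_{\mb{C}}(\mf{n}_t^*)$. The differentials of $\mb{F}_t$, which are built from the actions of $\exp \mf{n}_t$ on the three (respectively four) distinguished fundamental representations, then restrict from canonical $G$-equivariant maps of vector bundles on $G/P_t$; this geometric origin is what makes the two-complex equivalence of Lemma~\ref{lem:dual-format-equiv} not surprising.

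First I would identify the ideal $I_t = \operatorname{im}(d_1) \subset R_t$ as the defining ideal of $Y_t \cap N_t$ for a specific $Y_t$. The entries of $d_1$ are, up to sign, matrix coefficients obtained by pairing the lowest-weight functional of $V(\omega_{x_1})$ with the orbit $\exp \mf{n}_t \cdot v^{\mathrm{top}}$; their common zero locus cuts out the Pfaffian subvariety in the $D_n$ length-three case, and the analogous singular strata for $E_n$. Next I would match the $\mb{Z}^n$-multigraded Hilbert series of $R_t/I_t$ against the Euler characteristic of $\mb{F}_t$, using the multigradings displayed in Sections~\ref{sec:length3} and~\ref{sec:Gor4}, to verify that $\operatorname{codim}(Y_t \cap N_t)$ equals the projective dimension of $\mb{F}_t$. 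Acyclicity then reduces to showing that $R_t/I_t$ is a perfect $R_t$-module, which follows from the Cohen–Macaulayness of opposite Schubert varieties proved by Ramanathan together with the Buchsbaum–Eisenbud acyclicity criterion. Alternatively, one may invoke the Kempf–Lascoux–Weyman geometric technique: a suitable Kempf or Bott–Samelson collapsing $Z_t \to Y_t$ produces a minimal free resolution whose terms are computed via Bott–Borel–Weil cohomology on $G/P_t$, and these terms can then be matched with those of $\mb{F}_t$ directly.

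The main obstacle is the precise identification of the scheme $Y_t \cap N_t$ and the verification that $\operatorname{coker}(d_1)$ equals $R_t/I_t$ scheme-theoretically, rather than just set-theoretically. For $D_n$ and $E_6$ this can be checked directly from the explicit differentials recorded in Section~\ref{sec:length3}. For $E_7$ and especially $E_8$, the extremal fundamental representations are too large for hand computation, so one must argue representation-theoretically: the $G$-equivariance of the whole construction forces the cokernel to be a module over the relevant Levi subgroup of $P_t$, and a Weyl-character argument pins down the scheme structure once the underlying reduced subvariety has been identified. Lemma~\ref{lem:dual-format-equiv} is useful here, as it lets one switch to whichever of $\mb{F}_{x_1}$ and $\mb{F}_{z_1}$ (or $\mb{F}_{z_2}$ in the length-four case) makes the cokernel more tractable in a given Dynkin type.
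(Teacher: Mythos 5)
Your high-level plan---connecting the complexes to opposite Schubert varieties in the big open cell and invoking the Buchsbaum--Eisenbud acyclicity criterion---matches the spirit of the paper's proof, but the central technical step is missing and the logic of the reduction is circular.

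The Buchsbaum--Eisenbud criterion requires verifying $\operatorname{depth} I(d_k) \geq k$ for \emph{every} $k$, not just $k=1$. Knowing that the opposite Schubert variety $Y_t$ is Cohen--Macaulay (Ramanathan) does not by itself deliver these depth bounds for the ideals of minors $I(d_2), I(d_3), I(d_4)$, nor does it even give the bound for $I(d_1)$ without first establishing that $\operatorname{coker}(d_1) = R_t/I_t$ scheme-theoretically---which you acknowledge as the main obstacle but do not resolve. Your suggested route (matching Hilbert series against the Euler characteristic of $\mb{F}_t$) presupposes exactness to extract a Hilbert series from the Euler characteristic, so it cannot be used to prove exactness. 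The paper sidesteps this by never identifying the cokernel in advance: for each differential $d_k$, it embeds the open cell $N$ into a Grassmannian via the relevant fundamental representation, so that powers $p_w^m$ of (extremal) Pl\"ucker coordinates appear as maximal minors of a block matrix whose last block is $d_k^*$. For the three or four coordinates $p_1, p_{s_i}, p_{s_j s_i}, \ldots$ at the top of the Bruhat order that avoid the reflection at the second distinguished node, the corresponding minors must use the columns coming from $d_k$, placing $p_w^m$ inside $I(d_k)$ directly. Since these Pl\"ucker coordinates cut out a locus of the correct codimension by the Bruhat-order description of Schubert varieties, each $I(d_k)$ has the required depth, and Buchsbaum--Eisenbud applies. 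Nothing about perfection of the quotient or Ramanathan's theorem is invoked.

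Two smaller points: first, your proposal asserts the complexes are \emph{minimal} free resolutions, but this fails, e.g., for $D_n$ with $n$ even, where the last column of $d_2$ contains a unit, and for $E_5=D_5$ in the length-four family, where units appear in $d_3$. Second, the alternative route via the Kempf--Lascoux--Weyman geometric technique is doubtful here: there is no obvious Kempf collapsing or Bott--Samelson resolution whose pushforward would reproduce non-minimal complexes of this particular shape, and the paper does not pursue it.
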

The main tool from commutative algebra we will need is the Buchsbaum-Eisenbud acyclicity criterion:
\begin{thm}[\cite{BE73}]\label{thm:acyclicity-criterion}
	Let $R$ be a ring. A complex
	\[
	0 \to F_n \xto{d_n} F_{n-1} \xto{d_{n-1}} \cdots \xto{d_2} F_2 \xto{d_1} F_0
	\]
	of free $R$-modules is exact iff
	\[
	\rank F_k = \rank d_k + \rank d_{k+1}
	\]
	and
	\[
	\operatorname{depth} I(d_k) \geq k
	\]
	for $k = 1,\ldots,n$, where $I(d_k)$ is the ideal of $(\rank d_k)\times(\rank d_k)$ minors of $d_k$.
\end{thm}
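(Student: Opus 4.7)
The plan is to verify the Buchsbaum--Eisenbud criterion (Theorem~\ref{thm:acyclicity-criterion}) by giving the complexes $\mathbb{F}_t$ a geometric interpretation in terms of Schubert varieties. Recall that $R_t=\operatorname{Sym}_{\mathbb{C}}(\mathfrak{n}_t^*)$ is the coordinate ring of the unipotent group $N_t=\exp\mathfrak{n}_t$, and $N_t$ embeds in the partial flag variety $G/P_t$ as the big open cell opposite to the base point, where $P_t$ is the parabolic generated by the Borel and the root subgroups of weight $\geq 0$ in the $t$-grading. For any $u\in N_t$, the differentials of $\mathbb{F}_t$ specialize at $u$ to the compositions $p^{\mathrm{bottom}}_{s}\circ\rho_V(u)\circ i^{\mathrm{top}}_{t}$ from the construction. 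Consequently, the vanishing locus of $d_1$ inside $N_t$ is precisely the intersection of $N_t$ with the zero locus of a certain equivariant section of a $G$-equivariant vector bundle on $G/P_t$, and this zero locus is cut out by the orbit closure through the lowest weight line (or the appropriate Schubert variety $X_w$ determined by the pair of vertices).

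First I would establish the rank conditions. At a sufficiently generic $u\in N_t$ the map $\rho_V(u)$ takes the top $t$-graded piece isomorphically onto a subspace complementary to the kernel of the projection $p^{\mathrm{bottom}}_{s}$; this can be seen by specializing to a translate of the longest Weyl group element, which mixes all weight spaces. The resulting ranks of $d_k$ are then determined purely by the representation decompositions listed in \S\ref{sec:length3} and \S\ref{sec:Gor4}: the ranks of $d_k$ and $d_{k+1}$ partition the dimensions of the free modules $F_k\otimes R_t$ precisely as required, essentially because of the self-dual pairing (length four) or the symmetric rank pattern $(1,n,n,1)$, $(1,n-r,n,r)$ (length three) among the boxed components.

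Next I would verify the depth conditions by identifying each ideal $I(d_k)$ with an ideal defining a Schubert subvariety restricted to $N_t$. Concretely, for $k=1$ the ideal $I(d_1)$ cuts out the closure in $N_t$ of the union of opposite Schubert cells indexed by the Weyl group elements below a specific $w$ in the Bruhat order, a variety known to be normal and Cohen--Macaulay of codimension equal to the length of the complex (see Ramanathan, Kempf). Since the ambient ring $R_t$ is regular, Cohen--Macaulayness gives $\operatorname{depth} I(d_1) \geq n$, where $n=3$ or $4$. The depth bounds for the intermediate $I(d_k)$ follow either by iterating this argument on smaller Schubert varieties determined by removing weights from the top, or by appealing to the Kempf--Lascoux--Weyman geometric technique, where the complex is recognized as the pushforward of a Koszul-type resolution on a desingularization.

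The main obstacle I expect is the clean identification of the image ideal $I(d_1)$ with the ideal of the correct opposite Schubert variety, and justifying that the complex matches the minimal free resolution of its coordinate ring in that neighborhood. This requires matching $\mathbb{Z}^n$-multigraded Hilbert series, which in turn boils down to standard character computations for the fundamental representations involved but is delicate in the $E_7$ and $E_8$ cases where the representations $V(\omega_2)$ are enormous. A more conceptual route would be to realize the whole resolution on a desingularization $Z\to X_w$ as the tautological Koszul complex of a subbundle, pushing forward via $Rf_*$ and invoking Bott's theorem or Kempf vanishing to kill higher cohomology; this would simultaneously yield the rank conditions, the depth estimates, and the geometric meaning of the complexes as resolutions of opposite Schubert ideals in the big cell.
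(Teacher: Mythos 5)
Your proposal does not address the statement you were asked to prove. Theorem~\ref{thm:acyclicity-criterion} is the Buchsbaum--Eisenbud exactness criterion itself: a general statement about an arbitrary ring $R$ and an arbitrary finite complex of free $R$-modules, asserting that exactness is \emph{equivalent} to the rank additivity $\rank F_k = \rank d_k + \rank d_{k+1}$ together with the depth bounds $\operatorname{depth} I(d_k) \geq k$. What you have sketched instead is an \emph{application} of that criterion to the specific complexes $\mathbb{F}_t$ built from Lie algebra bigradings --- i.e.\ an argument toward Theorem~\ref{thm:acyclic} --- and your argument explicitly presupposes the very criterion it is supposed to establish (``The plan is to verify the Buchsbaum--Eisenbud criterion\dots''). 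Nothing in your text engages with the actual content of the theorem: there is no treatment of the ``only if'' direction (that exactness forces the rank and depth conditions), no induction on the length $n$ of the complex, no localization at primes $\mathfrak{p}$ with $\operatorname{depth} R_{\mathfrak{p}}$ small to show the localized complex splits, and no use of the multiplicative structure of the ideals of minors (e.g.\ McCoy-type rank arguments) that the original proof in \cite{BE73} relies on. Schubert varieties, Plücker coordinates, and Bott vanishing are simply not relevant to a statement that holds over every commutative ring.

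Note also that the paper itself offers no proof of this theorem --- it is quoted from \cite{BE73} as a known tool --- so the expected response here was either a proof of the general criterion or an acknowledgment that it is a cited classical result. Separately, even read as an attempt at Theorem~\ref{thm:acyclic}, your sketch diverges from what the paper actually does: rather than invoking Cohen--Macaulayness of Schubert varieties or the Kempf--Lascoux--Weyman geometric technique, the paper bounds $\operatorname{codim} I(d_k)$ directly by locating explicit powers of the extremal Plücker coordinates $p_1, p_{s_2}, p_{s_4 s_2}, \ldots$ among the minors of each differential, using the embeddings of $G/P$ into classical Grassmannians. Your route through Cohen--Macaulayness of $I(d_1)$ alone would in any case not suffice, since the criterion demands depth bounds on \emph{every} $I(d_k)$, and your proposed handling of the intermediate ideals is left entirely vague.
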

Note that our complexes are all over polynomial rings, so depth is the same as codimension.

The rest of the proof comes from representation theory---namely, the study of Schubert varieties and Pl\"ucker coordinates. The needed background can be found in \cite{LR}. 
We will set things up to match \cite{SW21}, in preparation for \S\ref{sec:invariants}. Fix a Dynkin diagram $T$, and let $\mf{g}$ (resp. $G$) denote the associated complex simple Lie algebra (resp. simply-connected group). Choose a vertex $i \in T$ and let $P_i$ denote the maximal parabolic subgroup corresponding to the nonpositive part $\mf{p}_i \subset \mf{g}$ in the grading induced by the simple root $\alpha_i$ (henceforth abbreviated ``$i$-grading'').

Let $W$ denote the Weyl group of $T$ and $W_{P_i}\subset W$ the Weyl group of $T \setminus \{i\}$. Both the Schubert varieties $X_w \subset G/P_i$ as well as the (extremal) Pl\"ucker coordinates $p_w$ on $G/P_i$ are indexed by elements $w$ of $W/W_{P_i}$. If $\tau, w \in W$ are shortest length representatives of their respective cosets in $W/W_{P_i}$, the function $p_\tau$ vanishes on $X_w$ if and only if $\tau \not\leq w$ in the Bruhat order. Moreover, $\{p_\tau = 0 : \tau \not\leq w\}$ cuts out $X_w$ set-theoretically. We will write $X^w = w_0 X_{w_0 w}$ for opposite Schubert varieties, where $w_0 \in W$ is the longest element. If $w$ is a shortest length representative of its coset, then $\codim X^w = \ell(w)$.

Let $V = V(\omega_i)^*$ be the representation with lowest weight $-\omega_i$, and let $V^{\mathrm{bot}}$ denote its one-dimensional bottom component in the $i$-grading, spanned by a lowest weight vector. The Pl\"ucker embedding of $G/P_i$ into $\mb{P}(V)$ is given by the formula
\begin{equation}\label{eq:flag-embed}
	\begin{split}
	G/P_i &\to \mb{P}(V)\\
	[g] &\mapsto [V^\mathrm{bot} \hookrightarrow V \xto{g\cdot} V]
	\end{split}
\end{equation}
The Pl\"ucker coordinates $p_w$ are then just the homogeneous coordinates on $\mb{P}(V)$ corresponding to the extremal weights $w\cdot (-\omega_i)$ in $V$.

Let $\mf{n}_i \subset \mf{g}$ be the negative part of $\mf{g}$ in the $i$-grading, and let $N_i$ be the corresponding unipotent subgroup. Then $N_i w_0 \subset G/P_i$ is the open Schubert cell $C_{w_0}$, which is given by the non-vanishing of the Pl\"ucker coordinate $p_{w_0}$, which we will normalize to equal 1. Thus we can parametrize this patch via
\begin{align*}
	\mf{n}_i &\to \mb{P}(V)\\
	X &\mapsto [\mb{C} \hookrightarrow V \xto{\exp(X)\cdot} V]
\end{align*}
where $\mb{C} \hookrightarrow V$ is the \emph{highest} weight line, i.e. the one-dimensional top component in the grading induced by the node dual to $i$ on $T$.

As before, write $R_i$ for the polynomial ring $\Sym \mf{n}_i^*$. The big open cell is thus $\Spec \Sym \mf{n}_i^*$.

\begin{example}
	Before we show how this machinery can be used to prove Theorem~\ref{thm:acyclic}, we first illustrate it concretely in the case of type $D_n$ for $n$ even. We will take $i = n$ and suppress the subscript $i$ in the following.
	
	The quotient $G/P$ is the orthogonal Grassmannian $OG(n,2n)$, which can be viewed inside of the classical Grassmannian $\Gr(n,2n)$ via the map
	\begin{align*}
		G/P &\to \Gr(n,2n)\\
		[g] &\mapsto [V(\omega_1)^\mathrm{bot} \hookrightarrow V(\omega_1) \xto {g\cdot} V(\omega_1)]
	\end{align*}
	where, letting $SL(F)$ denote the subgroup corresponding to the vertices $1,\ldots,n-1$ on the Dynkin diagram, the standard representation $V(\omega_1)$ is $F^* \oplus F$ with the evident quadratic form $Q(\varphi,x) = \varphi(x)$.
	
	Let $B$ be a generic element of $\mf{n}=\bigwedge^2 F^*$, viewed as a skew-symmetric matrix $F \to F^*$ with entries in $R = \Sym \mf{n}^*$. The open cell is given parametrically by the block matrix $\begin{bmatrix}
	I_n & B
	\end{bmatrix}^\top$. The skew-symmetry of $B$ reflects that the column span in $F^* \oplus F$ is an isotropic subspace for the quadratic form $Q$.
	
	However, the Pl\"ucker coordinates on $G/P$ do not appear as minors of this matrix---rather, their squares do. To see this, recall that the Pl\"ucker coordinates come from the embedding of $G/P$ into $\mb{P}(V(\omega_n))$. The map $V(\omega_n)^{\mathrm{top}} \hookrightarrow V(\omega_n) \xto{\exp B} V(\omega_n)$ parametrizing the open Schubert cell is
	\[
		\begin{bmatrix}
		Q_0 & Q_1 & \cdots & Q_{n/2-1} & Q_{n/2}
		\end{bmatrix}^\top
	\]
	where $Q_j$ consists of the $(2j)\times(2j)$ Pfaffians of $B$. In particular, $p_{w_0} = Q_0 = 1$ and $p_1 = Q_{n/2}$ is the Pfaffian of $B$.
	
	On the other hand, the minors of $\begin{bmatrix}
	I_n & B
	\end{bmatrix}^\top$ are coordinates on $\mb{P}(\bigwedge^n V(\omega_1))$, inside of which we have $\mb{P}(V(2\omega_n))$. This can be seen by a weight calculation: the weights in $F \subset V(\omega_1)$ can be obtained by reflecting $\omega_1$ sequentially by $s_1,\ldots,s_{n-1}$, and their sum is
	\begin{gather*}
		\omega_1 + (\omega_2 - \omega_1) + (\omega_3 - \omega_2) + \cdots + (\omega_{n-2} - \omega_{n-3})\\
		+ (\omega_n + \omega_{n-1} - \omega_{n-2}) + (\omega_n - \omega_{n-1}) = 2\omega_n.
	\end{gather*}
	Thus $\bigwedge^n F \subset \bigwedge^n V(\omega_1)$ is the highest weight part of $V(2\omega_n) = \bigwedge^n F^* \oplus \cdots \oplus \bigwedge^n F \subset \bigwedge^n V(\omega_1)$. If $p_w$ is the coordinate for the extremal weight $\omega$ in $V(\omega_n)$, then $p_w^2$ will be the coordinate for the extremal weight $2\omega$ in $V(2\omega_n) \subset \bigwedge^n V(\omega_1)$; for example $p_1^2 = \det B$.
\end{example}
This example illustrates how we can embed $G/P$ into classical Grassmannians and (powers of) the Pl\"ucker coordinates for $G/P$ appear among the classical Pl\"ucker coordinates for the Grassmannian. We will exploit this to prove Theorem~\ref{thm:acyclic}.

For all the complexes we've constructed, the proofs of acyclicity are almost entirely the same, so we will just treat one specific case from each family to avoid notational complications. Minor adjustments are only required for Dynkin diagrams with exceptional duality as per Remark~\ref{rem:dual-node}. We will not discuss this adjustment, because it is only necessary for type $A_n$ (uninteresting), $D_n$ (known), and $E_6$ (simple enough to verify acyclicity in Macaulay2).

\begin{proof}[Proof of acyclicity (for codimension three examples)]
For length three, we will prove acyclicity of the $(1,7,8,2)$ complexes $\mb{F}_2, \mb{F}_3$ constructed from $E_8$. By Lemma~\ref{lem:dual-format-equiv}, it is sufficient to handle one of the two, so we will just consider $\mb{F}_2$. The same ideas can be used to directly prove acyclicity of $\mb{F}_3$; however treating $\mb{F}_2$ will be useful for \S\ref{sec:invariants}. So let $G = E_8$ and $i=2$. We will suppress the subscript $i$ in what follows. The quotient $W/W_P$ is in bijection with the $W$-orbit of the weight
\[\renewcommand*{\arraycolsep}{2pt}
	\omega_2 = \begin{matrix}
	1 & 0 & 0 & 0 & 0 & 0\\
	& 0\\
	& 0
	\end{matrix}.
\]
\begingroup
\begin{figure}[!h]
\renewcommand*{\arraycolsep}{2pt}
\[\begin{tikzcd}[ampersand replacement=\&]
\&\&\begin{matrix}
1 & 0 & 0 & 0 & 0 & 0\\
& 0\\
& 0
\end{matrix} \ar[d,dash,"s_2"]\\
\&\&\begin{matrix}
-1 & 1 & 0 & 0 & 0 & 0\\
& 0\\
& 0
\end{matrix} \ar[d,dash,"s_4"]\\
\&\&\begin{matrix}
0 & -1 & 1 & 0 & 0 & 0\\
& 1\\
& 0
\end{matrix}\ar[dl,dash,"s_3",swap] \ar[dr,dash,"s_5"]\\
\&\begin{matrix}
0 & 0 & 1 & 0 & 0 & 0\\
& -1\\
& 1
\end{matrix} \ar[dl,dash,"s_1",swap] \ar[dr,dash,"s_5"]\&\& \begin{matrix}
0 & 0 & -1 & 1 & 0 & 0\\
& 1\\
& 0
\end{matrix} \ar[dl,dash,"s_3",swap] \ar[dr,dash,"s_6"]\\
\cdots\cdots\cdots \&\& \cdots\cdots\cdots \&\& \cdots\cdots\cdots
\end{tikzcd}\]
\caption{$W/W_{P_2}$ for $E_8$.}\label{fig:Bruhat3}
\end{figure}
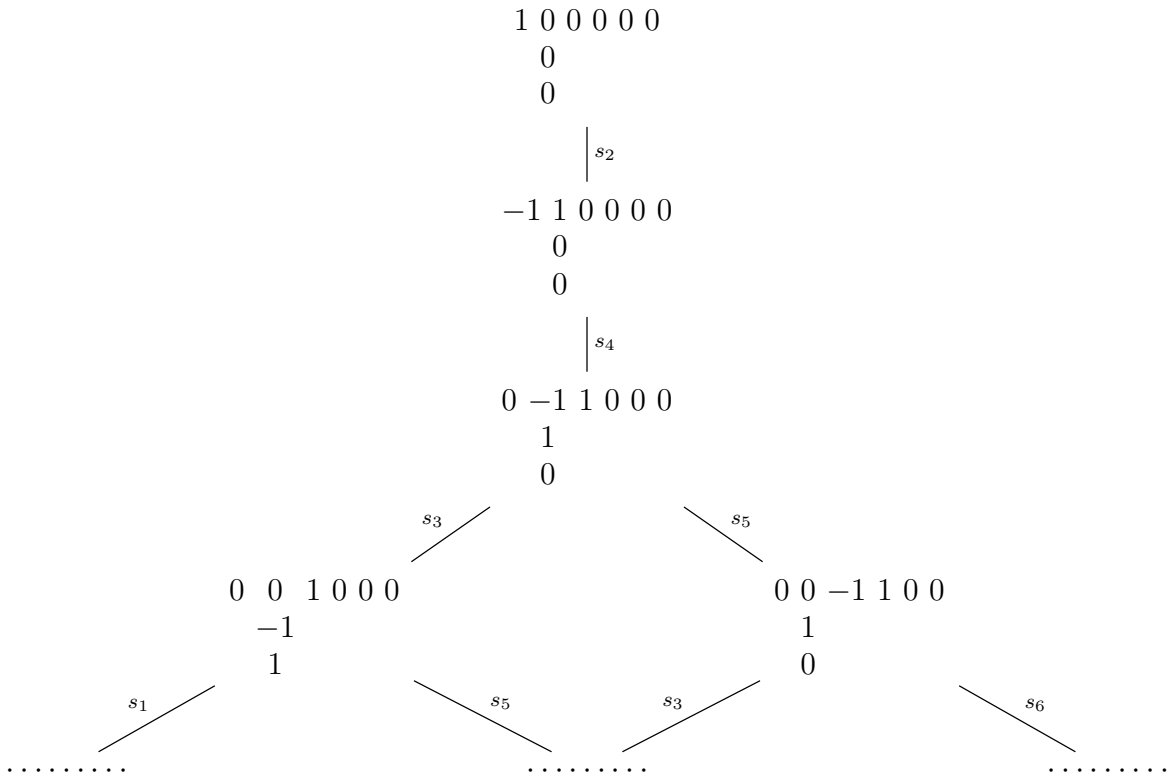
\endgroup
Acting by elements $w \in W$ of minimal length $\ell$ in their coset, we get the Bruhat graph for $W/W_P$ in Figure~\ref{fig:Bruhat3}, where $s_i$ denotes reflection at the vertex $i$. The Pl\"ucker coordinates $p_1, p_{s_2}$ set-theoretically cut out the codimension two (opposite) Schubert variety $X^{s_4 s_2}$. The coordinate $p_{s_4s_2}$ does not vanish on $X^{s_4 s_2}$, thus the vanishing locus of $(p_1, p_{s_2}, p_{s_4s_2})$ has codimension three. (In fact it is the union of the two codimension three Schubert varieties $X^{s_3 s_4 s_2}, X^{s_5 s_4 s_2}$.)

Now we will locate powers of these three Pl\"ucker coordinates in each ideal $I(d_k)$, which will show $\codim I(d_k) = 3$. For the differential $d_3$, let $V = V(\omega_1)$ and consider the embedding of $N$ into $\Gr(8,V)$ via \eqref{eq:flag-embed}. The representation $V$ has 11 graded components in the 2-grading:
\[
	V = F_2 \oplus \bigwedge^4 F_2 \oplus \cdots \oplus S_{4^7,3} F_2
\]
(where $S_{4^7,3}F_2 \cong F_2^*$) so this embedding is given parametrically by a $3875 \times 8$ block matrix, transposed for compactness,
\[
	\begin{bmatrix}
	I & Q_1 & \cdots & Q_{9} & Q_{10}
	\end{bmatrix}^\top
\]
where the entries of $Q_j$ have degree $j$. In particular $Q_{10}$ is the part mapping to the bottom graded component, and $Q_0$ is just the identity matrix.

The sum of the weights appearing in the top component is
\begin{gather*}
	\omega_1 + (\omega_3 - \omega_1) + (\omega_4 - \omega_3)
	+ (\omega_2 + \omega_5 - \omega_4) + (\omega_2 + \omega_6 - \omega_5)\\
	+ (\omega_2 + \omega_7 - \omega_6) + (\omega_2 + \omega_8 - \omega_7)
	+ (\omega_2 - \omega_8) = 5\omega_2
\end{gather*}
exhibiting $V(5\omega_2) = \bigwedge^8 F_2 \oplus \cdots \oplus \bigwedge^8 F_2^*$ inside $\bigwedge^8 V(\omega_1)$. So, after normalizing $p_{w_0} = 1$ on $N$, we will find $p_w^5$ as a minor of the $3875 \times 8$ matrix for each Pl\"ucker coordinate $p_w$.

Observe that $d_3^*$ is the $2 \times 8$ submatrix mapping to the the bottom part $F_3 \subset V$ in the 3-grading. We have $p_1^5 = \det Q_{10}$, so the minor computing $p_1^5$ evidently involves the two columns of $Q_{10}$ comprising $d_3$. The crucial observation is that the minor $p_w^5$ will also involve these two columns if $w$ does not contain the reflection $s_3$. This is because each $s_i \in W$ other than $s_3$ is represented by an element of $SL(F_1) \times SL(F_3)$. Thus we conclude the minors $p_1^5$, $p_{s_2}^5$, and $p_{s_4s_2}^5$ are in $I_2(d_3)$ (e.g. by Laplace expansion).

For $d_2$, we instead take $V = V(\omega_8)$. We get an embedding of $N$ into $\Gr(8,V)$ by a $248 \times 8$ matrix
\[
	\begin{bmatrix}
	I & Q_1 & \cdots & Q_5 & Q_6
	\end{bmatrix}^\top.
\]
Now $p_1^3$ is $\det Q_6$. Six columns of $Q_6$ come from the bottom 3-graded piece $F_1^*\subset V$, and thus the same is true for the minors $p_{s_2}^3$ and $p_{s_4s_2}^3$ (again because we do not use the reflection $s_3$). Hence all three minors are in $I_6(d_2)$.

Finally, for $d_1$, we take $V = V(\omega_2)$. We get an embedding of $N$ into $\mb{P}(V)$ by a $147250 \times 1$ matrix
\[
\begin{bmatrix}
1 & Q_1 & \cdots & Q_{15} & Q_{16}
\end{bmatrix}^\top.
\]
In this embedding, $p_1$ is just the entry $Q_{16}$. The differential $d_1^*$ is the part of this matrix mapping to the bottom 3-graded piece $F_1 \subset V$. By the same reasoning as before, we deduce that $p_{s_2}$ and $p_{s_4s_2}$ are also entries of $d_1$.
\end{proof}

\begin{proof}[Proof of acyclicity (for codimension four Gorenstein examples)]
\begingroup
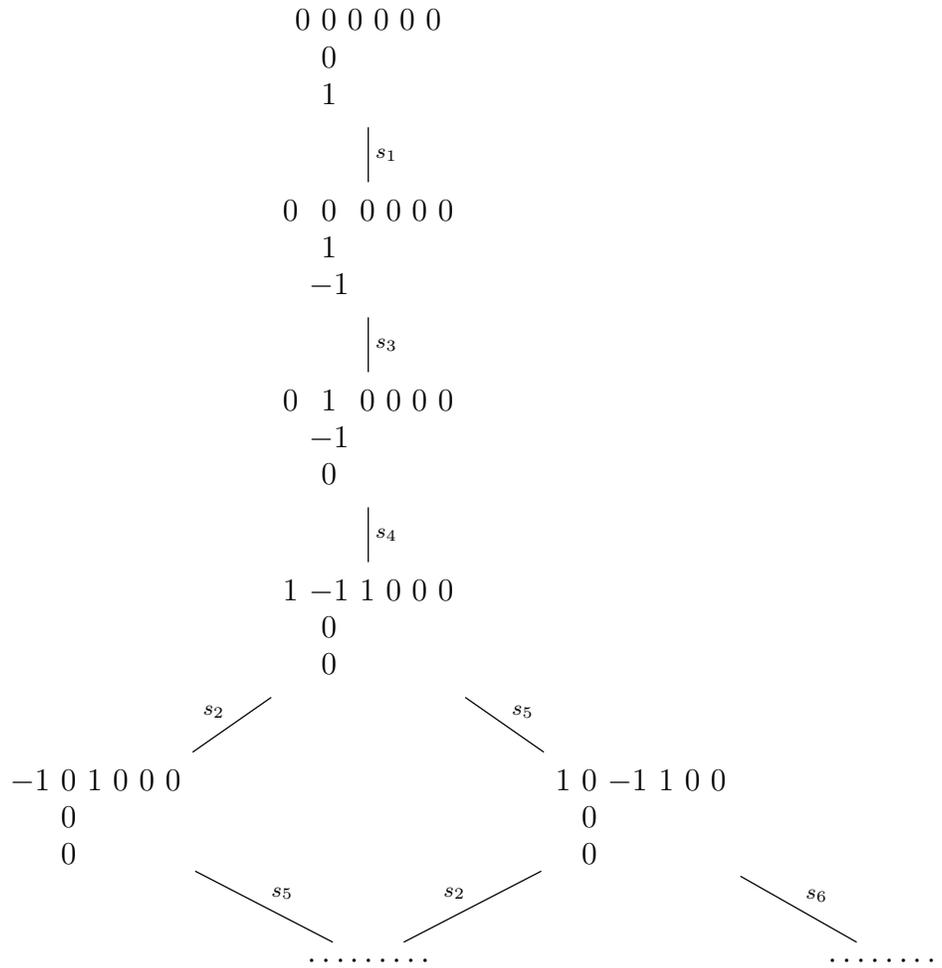
\begin{figure}[!h]
	\renewcommand*{\arraycolsep}{2pt}
	\[\begin{tikzcd}[ampersand replacement=\&]
	\&\begin{matrix}
	0 & 0 & 0 & 0 & 0 & 0\\
	& 0\\
	& 1
	\end{matrix} \ar[d,dash,"s_1"]\\
	\&\begin{matrix}
	0 & 0 & 0 & 0 & 0 & 0\\
	& 1\\
	& -1
	\end{matrix} \ar[d,dash,"s_3"]\\
	\&\begin{matrix}
	0 & 1 & 0 & 0 & 0 & 0\\
	& -1 \\
	& 0
	\end{matrix} \ar[d,dash,"s_4"]\\
	\&\begin{matrix}
	1 & -1 & 1 & 0 & 0 & 0\\
	& 0\\
	& 0
	\end{matrix}\ar[dl,dash,"s_2",swap] \ar[dr,dash,"s_5"]\\
	\begin{matrix}
	-1 & 0 & 1 & 0 & 0 & 0\\
	& 0\\
	& 0
	\end{matrix} \ar[dr,dash,"s_5"]\&\& \begin{matrix}
	1 & 0 & -1 & 1 & 0 & 0\\
	& 0\\
	& 0
	\end{matrix} \ar[dl,dash,"s_2",swap] \ar[dr,dash,"s_6"]\\
	\& \cdots\cdots\cdots \&\& \cdots\cdots\cdots
	\end{tikzcd}\]
	\caption{$W/W_{P_1}$ for $E_8$.}\label{fig:Bruhat4}
\end{figure}
\endgroup
We will consider the examples constructed from $E_8$. By Lemma~\ref{lem:dual-format-equiv}, it will suffice to treat $\mb{F}_1$. The proof method is the same as for length three: it suffices to find enough powers of Pl\"ucker coordinates in the ideals $I(d_k)$ to deduce they have codimension four. Note that the complex is self-dual, so it suffices to consider $I(d_1)$ and $(d_2)$.

This time we take $i=1$, and the top of the Bruhat graph for $W/W_P$ is shown in Figure~\ref{fig:Bruhat4}.

Mimicking the argument for length three, one can find powers of the top four Pl\"ucker coordinates $p_1, p_{s_1}, p_{s_3 s_1}, p_{s_4 s_3 s_1}$ in each ideal. One considers the embedding of $G/P$ into $\Gr(14,V(\omega_8))$ to show this for $I_7(d_2)$, and the embedding into $\mb{P}(V(\omega_1))$ to show this for $I_1(d_1)$. The rest of the proof is exactly the same so we omit it.
\end{proof}
\clearpage

\section{Invariants}\label{sec:invariants}
For the formats associated to Dynkin diagrams $E_n$ ($n=6,7,8$), the proof of Theorem~\ref{thm:acyclic} actually shows that the complexes $\mb{F}_2$ resolve the coordinate rings of the (restricted) Schubert varieties $Y^{\sigma_3}, Y^{\sigma_3'}$ considered in \cite{SW21}. Taking $(1,7,8,2)$ as in the proof for example, we find that the differential $d_1$ of $\mb{F}_2$ exactly consists of the Pl\"ucker coordinates $p_1,p_{s_2},p_{s_4s_2},\ldots,p_{s_8s_7s_6s_5s_4s_2}$, which generate the ideal of $Y^{\sigma_3}= N \cap X^{\sigma_3}$ in $N$, where $\sigma_3 = s_3 s_4 s_2$ \cite[\S5]{SW21}. In that paper, linkage was used to deduce the graded Betti numbers of each resolution, noting that the codimension forces various equations upon them---namely, $\sum (-1)^i t^{b_{i,j}}$ must be divisible by $(1-t)^3$. Now we have given explicit descriptions of the resolutions, and indeed the $\mb{Z}^n$-multigraded Betti numbers displayed in \S\ref{sec:length3} recover those stated in \cite{SW21} when coarsened to the 2-grading.

The linear sections of $Y^{\sigma_3}, Y^{\sigma_3'}$ obtained by setting all variables in $\mf{n}^*_{\geq 2}$ to zero are also studied in \cite{SW21}. We recast this in terms of our construction and expand it to include the resolutions of length four:
\begin{prop}\label{prop:linear-section}
	Let $i=2$ (resp. $i=1$). Let $\mf{n}_k$ denote the degree $-k$ part of $\mf{g}$ in the $i$-grading, $\mf{n} = \bigoplus_{k\geq 1} \mf{n}_k$, and $R = \Sym \mf{n}^*$. Consider the map $\phi\colon R \to R'\coloneqq \Sym \mf{n}_1^*$ sending all variables $\mf{n}^*_{\geq 2}$ to zero. The length three (resp. four) complexes $\mb{F}_i \otimes R'$ for $T = E_6, E_7, E_8$ (resp. $T = E_7, E_8$) resolve ideals in $R'$ generated by a $(T\setminus \{i\})$-invariant $\Delta$ together with some of its partial derivatives.
\end{prop}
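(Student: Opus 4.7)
The plan is to (i) describe $\mb{F}_i \otimes R'$ concretely as the result of replacing the trace element $L_i$ by its lowest $i$-degree part $L_{i,1}$, (ii) identify the entries of the last differential as $\Delta$ and some of its partial derivatives, and (iii) prove acyclicity of the specialized complex.

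For (i), decompose $L_i = L_{i,1} + L_{i,2} + \cdots$ where the coefficients of $L_{i,k}$ are the variables in $\mf{n}_k^*$. The map $\phi$ sends all variables in $\bigoplus_{k \geq 2} \mf{n}_k^*$ to zero, so $\phi^* \mb{F}_i$ is obtained from $\mb{F}_i$ by replacing every occurrence of $L_i$ in the construction with $L_{i,1}$. Each differential of $\mb{F}_i \otimes R'$ is therefore obtained by discarding those terms of the original entries that involve a variable from $\bigoplus_{k \geq 2} \mf{n}_k^*$.

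For (ii), the last differential is built from a representation $V$ whose bottom $i$-graded component is one-dimensional, spanned by a lowest weight vector $v^{\mathrm{lw}}$. The entry corresponding to the highest weight vector $v^{\mathrm{hw}}$ in the domain is the matrix coefficient $\langle v^{\mathrm{lw}}, \exp(\sigma_V(L_{i,1})) v^{\mathrm{hw}} \rangle$. Since the lines through $v^{\mathrm{hw}}$ and $v^{\mathrm{lw}}$ are both $\mf{g}^{(i)}$-stable and $L_{i,1}$ is $\mf{g}^{(i)}$-invariant as an element of $\mf{n}_1 \otimes \mf{n}_1^*$, this matrix coefficient is a $\mf{g}^{(i)}$-invariant polynomial on $\mf{n}_1$; matching degrees identifies it with $\Delta$. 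For the remaining entries, $\mf{g}^{(i)}$-equivariance of the construction pairs each weight vector in the domain with a specific partial derivative of $\Delta$: the $\mb{Z}^n$-multigrading on $\mb{F}_i$ assigns to each entry a weight that must match the weight of some $\partial \Delta / \partial x$ for $x \in \mf{n}_1^*$, and by multiplicity-freeness of the relevant summands of $\Sym^{\deg \Delta - 1} \mf{n}_1^*$ the entry coincides with that partial derivative up to a nonzero scalar. The explicit data in \S\ref{sec:length3} and \S\ref{sec:Gor4} confirms this identification in each case.

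For (iii), we apply the Buchsbaum-Eisenbud criterion. The ranks of the differentials in $\mb{F}_i \otimes R'$ match those of $\mb{F}_i$ since the complex is $\mb{Z}^n$-multigraded and the ranks are determined combinatorially. To verify the depth conditions, we mimic the Pl\"ucker coordinate arguments of \S\ref{sec:proofres} but restricted to the linear subspace of $N$ cut out by $\{L_{i,\geq 2} = 0\}$. The key observation is that the Pl\"ucker coordinates at the top of the Bruhat graph for $W/W_{P_i}$, whose powers we located in the ideals $I(d_k)$, correspond to Weyl group elements that are products of simple reflections $s_j$ with $j \neq i$; consequently those coordinates are polynomials in the $\mf{n}_1^*$ variables alone and survive specialization, furnishing the required codimension bounds. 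The main obstacle is confirming that these surviving Pl\"ucker coordinates still cut out a codimension-$k$ subvariety of the linear section $\{L_{i,\geq 2} = 0\} \cap N$ rather than dropping to something smaller; this is essentially the content of the linear section analysis in \cite{SW21} and can also be verified directly via the explicit parametrization of the open cell.
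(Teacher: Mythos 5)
Parts (i) and (ii) of your argument essentially match the paper: the specialized differentials are obtained by replacing $L_i$ with its degree-$(-1)$ part, and the identification of the entries of the last differential with $\Delta$ and its partial derivatives proceeds by equivariance and a multiplicity-one computation. The paper needs a computer check to verify that the relevant representation (e.g.\ $\bigwedge^3 F_2^*$ inside $S_{15}(\bigwedge^3 F_2)$ for $E_8$) appears with multiplicity one, which you reference as ``multiplicity-freeness''; that is the same dependence, not a genuinely different route.

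Your part (iii) has a real gap. The claim that the top Pl\"ucker coordinates ``correspond to Weyl group elements that are products of simple reflections $s_j$ with $j\neq i$'' is false: for $i=2$ on $E_8$ the elements are $1,\ s_2,\ s_4 s_2$, and both $s_2$ and $s_4 s_2$ involve $s_2=s_i$. (What is true is that they avoid $s_{z_1}=s_3$, but that is the condition used in \S\ref{sec:proofres} to show that the Pl\"ucker powers land in $I(d_k)$, not anything about which variables appear.) Consequently the conclusion that ``those coordinates are polynomials in the $\mf{n}_1^*$ variables alone'' is also false: $p_1$, for instance, has $i$-degree $16$ but contains many monomials involving variables from $\mf{n}^*_{\geq 2}$; only the image $\phi(p_1)=p_1'$ lies in $R'$. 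You acknowledge that the main obstacle is verifying that the codimension does not drop after restriction, but the observation you offer does not resolve it, and the reference to \cite{SW21} only covers the length-three families, not the length-four ones included in the statement. The paper instead reduces both issues ($p_1'\neq 0$ and acyclicity) to showing that $p_1',p_{s_2}',p_{s_4 s_2}'$ is a regular sequence in $R'$, and verifies this by a computer computation made tractable through the conical-slice trick: since the subscheme cut out by these three functions is conical, it suffices to substitute random linear forms in three variables and check the result is a complete intersection, a step your proposal omits.
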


We give a sketch of a computer-assisted proof of this statement. A conceptual proof would be desirable, but we remark that Proposition~\ref{prop:linear-section} is not essential to the broader goal of understanding the structure theory of free resolutions. These linear sections are attractive because of their particularly simple descriptions, but we do not expect them to be generic in the sense of Conjectures~\ref{conj:gen3} and \ref{conj:gen4}.

\begin{proof}
We will demonstrate the proposition for the $E_8$ format $(1,7,8,2)$, following the notation from the proof of Theorem~\ref{thm:acyclic}. We will use $(-)'$ to denote $- \otimes R'$ or $\phi(-)$. The differentials in $\mb{F}_2$ were constructed using the exponential action of $\mf{n}$, so those of $\mb{F}_2'$ can be obtained by taking $\mf{n}_1$ in place of $\mf{n}$.

In particular, the differential $d_1'$ comes from the exponential action of $\mf{n}_1$ on $V(\omega_2)$. This representation has a one-dimensional top component $\mb{C}$ in the 2-grading. Since it is self-dual, the bottom component is $\mb{C}$ as well.

Let $L'\in \mf{n}_1 \otimes R'$ represent a generic element of $\mf{n}_1$. The entry $p_1'$ of $d_1'$ comes from the part of $\exp L'$ mapping from the top component of $V(\omega_2)$ to the bottom. This is an $\mf{sl}(F_2)$-equivariant map $\mb{C} \to R'_{16}$, so if $p_1'$ is nonzero then it is an $\mf{sl}(F_2)$-invariant of degree 16 on $\mf{n}_1 = \bigwedge^3 F_2^*$.

Assume for the moment that $\Delta \coloneqq p_1' \neq 0$. Its partial derivatives with respect to the variables in $R'$ would span a copy of the representation $\bigwedge^3 F_2^*$ in $R'_{15}$. On the other hand, the part of $\exp L'$ mapping from the second-highest component of $V(\omega_2)$ to the bottom component would also span such a representation. It can be verified by computer\footnote{We owe thanks to Witek Kra\'skiewicz for carrying out this calculation.} that the representation $\bigwedge^3 F_2^*$ appears exactly once inside of $S_{15}(\bigwedge^3 F_2)$, so these must coincide. 
By matching the weights, we see that explicitly
\[
	p_\mathrm{bot}(\exp L')(\epsilon_i \wedge \epsilon_j \wedge \epsilon_k) = \frac{\partial \Delta}{\partial x_{ijk}}
\]
for $\epsilon_i \wedge \epsilon_j \wedge \epsilon_k$ in the second-highest component $\bigwedge^3 F_2^*\subset V(\omega_2)$, and $x_{ijk} \in R'_1 = \bigwedge^3 F_2^*$ the corresponding variable. Here $p_\mathrm{bot}$ is projection of $V(\omega_2)$ onto its bottom component.

For $(1,7,8,2)$, if $e_1,\ldots,e_8$ is a standard basis (c.f. Definition~\ref{def:std-basis}) for $F_2$, the differential $d_1'$ of $\mb{F}'_2$ is therefore
\[
	\begin{bmatrix}
	\Delta & \dfrac{\partial\Delta}{\partial x_{178}} & \cdots & \dfrac{\partial\Delta}{\partial x_{678}}
	\end{bmatrix}.
\]
As an aside, one could obtain $d_1'$ for $(1,5,8,4)$ by taking the partial derivatives with respect to $x_{678}, x_{578}, x_{568}, x_{567}$ instead. Observe that $p_1' = \Delta$, $p_{s_2}' = \partial \Delta / \partial x_{678}$, and $p_{s_4 s_2}' = \partial \Delta / \partial x_{567}$ are shared.

It remains to check $p_1' \neq 0$ and that $\mb{F}_2'$ is acyclic. Both statements would follow from knowing that $p_1',p_{s_2}',p_{s_4 s_2}'$ is a regular sequence in $R'$. Let $X \subset \Spec R'$ be the subscheme cut out by these functions. If $H$ is any linear subspace through the origin, then $H$ meets every component of $X$ because $X$ is conical. Since $\codim(X \cap H, H) \leq \codim(X, \Spec R')$, it is sufficient to verify that $p_1',p_{s_2}',p_{s_4 s_2}'$ is a regular sequence after substituting each variable in $R'$ to a random linear form in $S = \mb{C}[x,y,z]$ (i.e. taking $H$ to be a 3-plane). This has been checked by computer\footnote{For $E_8$, the representation $V(\omega_2)$ is too large to handle by computer, but the images of $p_1',p_{s_2}',p_{s_4 s_2}'$ in $S$ can be computed using the adjoint $V(\omega_8)$ instead. Namely, letting $L'' = L' \otimes S$, the images of $p_1'^3, p_{s_2}'^3, p_{s_4 s_2}'^3$ in $S$ appear as $8\times 8$ minors of $\exp L''$ on $V(\omega_8)$.}.
\end{proof}
For length four, one takes $V(\omega_1)$ instead of $V(\omega_2)$, and $\mf{n}_1$ is a half-spinor representation of $\mf{so}(H\oplus H^*)$, but the ideas are otherwise the same. This also explains why $E_6$ is omitted from Proposition~\ref{prop:linear-section}: the representation $V(\omega_1)$ is not self-dual in this case.

We summarize the invariants which appear in this process. They are thoroughly detailed in \cite{SK77}.
\begin{itemize}
	\item For the $(1,5,6,2)$ resolution constructed from $E_6/P_2$, $\Delta$ is an $\mf{sl}_6$-invariant of degree 4 on $\bigwedge^3 \CC^6$. This is listed on the first table in \cite[\S7]{SK77} as item (5).
	\item For $(1,6,7,2)$ and $(1,5,7,3)$ from $E_7/P_2$, an $\mf{sl}_7$-invariant of degree 7 on $\bigwedge^3 \CC^7$, item (6).
	\item For $(1,7,8,2)$ and $(1,5,8,4)$ from $E_8/P_2$, an $\mf{sl}_8$-invariant of degree 16 on $\bigwedge^3 \CC^8$, item (7).
	\item For $(1,7,12,7,1)$ from $E_7/P_1$, an $\mf{so}_{12}$-invariant of degree 4 on a half-spinor representation, item (23).
	\item For $(1,8,14,8,1)$ from $E_8/P_1$, an $\mf{so}_{14}$-invariant of degree 8 on a half-spinor representation, item (24).
\end{itemize}

\section{Acknowledgments}
The second author is supported by the grant MAESTRO NCN -
UMO-2019/34/A/ST1/00263 - Research in Commutative Algebra and Representation Theory, NAWA POWROTY - PPN/PPO/2018/1/00013/U/00001 - Applications of Lie algebras
to Commutative Algebra, and OPUS grant National Science Centre, Poland grant UMO-2018/29/BST1/01290. The authors would like to thank Ela Celikbas, Lars Christensen, David Eisenbud, Sara Angela Filippini, Lorenzo Guerrieri, Witek Kra\'skiewicz, Jai Laxmi, Steven Sam, Jacinta Torres, and Oana Veliche for helpful conversations about the contents of this paper and related topics.

\end{document}